\pgfplotsset{compat=1.14}
\newcommand{\matpower}[0]{\textsc{Matpower}\xspace}
\newtheorem{theorem}{Theorem}
\newenvironment{theorem*}{\par\noindent{\bf Theorem\ }}{\hfill\\[2mm]}
\newenvironment{corollary*}{\par\noindent{\bf Corollary\ }}{\hfill\\[2mm]}
\newtheorem{definition}{Definition}
\newcommand{\tr}{\mathrm{tr}}
\newcommand{\onebf}{\mathbf{1}}
\newcommand{\msh}{\!\!\:}
\newcommand{\Abf}{\mathbf{A}}
\newcommand{\bbf}{\mathbf{b}}
\newcommand{\Cbf}{\mathbf{C}}
\newcommand{\cbf}{\mathbf{c}}
\newcommand{\Ccal}{\mathcal{C}}
\newcommand{\Cbb}{\mathbb{C}}
\newcommand{\dbf}{\mathbf{d}}
\newcommand{\ebf}{\mathbf{e}}
\newcommand{\Ecal}{\mathcal{E}}
\newcommand{\fbf}{\mathbf{f}}
\newcommand{\Fcal}{\mathcal{F}}
\newcommand{\Gcal}{\mathcal{G}}
\newcommand{\gbf}{\mathbf{g}}
\newcommand{\Hbf}{\mathbf{H}}
\newcommand{\Hcal}{\mathcal{H}}
\newcommand{\Hbb}{\mathbb{H}}
\newcommand{\Ibf}{\mathbf{I}}
\newcommand{\irm}{\mathrm{i}}
\newcommand{\Jbf}{\mathbf{J}}
\newcommand{\Mbf}{\mathbf{M}}
\newcommand{\obf}{\mathbf{o}}
\newcommand{\pbf}{\mathbf{p}}
\newcommand{\qbf}{\mathbf{q}}
\newcommand{\Rbb}{\mathbb{R}}
\newcommand{\sbf}{\mathbf{s}}
\newcommand{\Ubf}{\mathbf{U}}
\newcommand{\rbf}{\mathbf{r}}
\newcommand{\vbf}{\mathbf{v}}
\newcommand{\Vcal}{\mathcal{V}}
\newcommand{\Wbf}{\mathbf{W}}
\newcommand{\xbf}{\mathbf{x}}
\newcommand{\ybf}{\mathbf{y}}
\newcommand{\Ybf}{\mathbf{Y}}
\newcommand{\diagrm}{\mathrm{diag}}
\newcommand{\realrm}{\mathrm{real}}
\newcommand{\shrm}{\mathrm{sh}}
\newcommand{\maxrm}{\textnormal{max}}
\newcommand{\minrm}{\textnormal{min}}
\newcommand{\sm}[2]{\scaleto{#1\mathstrut}{#2pt}}
\definecolor{Fcolor}{rgb}{0, 0.5, 0.25}
\newif\ifcomment
\newcommand{\smallMinus}{\scalebox{1.2}[0.7]{-}}
\crefname{equation}{}{}
\crefname{table}{}{}%
\crefname{figure}{Figure}{Figures}
\crefname{algorithm}{Algorithm}{}
\crefname{table}{Table}{Tables}
\crefname{lemma}{Lemma}{Lemmas}
\crefname{theorem}{Theorem}{Theorems}
\crefname{section}{Section}{Sections}
\crefname{definition}{Definition}{Definitions}
\DeclareRobustCommand{\cev}[1]{%
  \mathpalette\do@cev{#1}%
}
\newcommand{\do@cev}[2]{%
  \fix@cev{#1}{+}%
  \reflectbox{$\m@th#1\vec{\reflectbox{$\fix@cev{#1}{-}\m@th#1#2\fix@cev{#1}{+}$}}$}%
  \fix@cev{#1}{-}%
}
\newcommand{\fix@cev}[2]{%
  \ifx#1\displaystyle
    \mkern#2 1mu
  \else
    \ifx#1\textstyle
      \mkern#2 3mu
    \else
      \ifx#1\scriptstyle
        \mkern#2 2mu
      \else
        \mkern#2 2mu
      \fi
    \fi
  \fi
}
\title{\LARGE \bf
Penalized Parabolic Relaxation for Optimal Power Flow Problem
}
\author{Fariba Zohrizadeh, Mohsen Kheirandishfard, Edward Quarm Jnr., and Ramtin Madani
\thanks{Fariba Zohrizadeh and Mohsen Kheirandishfard are with the Department of Computer Science and Engineering, University of Texas at Arlington, (email: fariba.zohrizadeh@uta.edu, mohsen.kheirandishfard@uta.edu). Edward Quarm and Ramtin Madani are with the Department of Electrical Engineering, University of Texas at Arlington (email: edwardarthur.quarmjnr@uta.edu, ramtin.madani@uta.edu). This work is in part supported by the NSF award 1809454 and a University of Texas System STARs award.
}%
}
\begin{document}

\maketitle
\thispagestyle{empty}
\pagestyle{empty}
\begin{abstract}
This paper is concerned with optimal power flow (OPF), which is the problem of optimizing the transmission of electricity in power systems. Our main contributions are as follows: (i) we propose a novel {\it parabolic relaxation}, which transforms non-convex OPF problems into convex quadratically-constrained quadratic programs (QCQPs) and can serve as an alternative to the common practice semidefinite programming (SDP) and second-order cone programming (SOCP) relaxations, (ii) we propose a penalization technique which is compatible with the SDP, SOCP, and parabolic relaxations and guarantees the recovery of feasible solutions for OPF, under certain assumptions. The proposed penalized convex relaxation can be used sequentially to find feasible and near-globally optimal solutions for challenging instances of OPF. Extensive numerical experiments on small and large-scale benchmark systems corroborate the efficacy of the proposed approach. By solving a few rounds of penalized convex relaxation, fully feasible solutions are obtained for benchmark test cases from \cite{coffrin2014nesta,bukhsh2013local,zimmerman2011matpower} with as many as 13659 buses. In all cases, the solutions obtained are not more than 0.32\% worse than the best-known solutions.
\end{abstract}
\section{Introduction}
The optimal power flow problem (OPF) is concerned with the optimization of voltages, power flows, and power injections across transmission and distribution networks.
This problem can be formulated as the minimization of a cost function (e.g., generation cost) subject to nonlinear constraints on power and voltage variables. Due to the inherent complexity of physical laws that model the flow of electricity, some of these constraints are non-convex, which makes the OPF problem NP-hard in general \cite{lehmann2016ac,verma2010power}.
Substantial research efforts have been devoted to this fundamental problem since the 1960s \cite{carpentier1962contribution}. 
Conventional methods for solving OPF include, linear approximations, local search algorithms, particle swarm optimization, fuzzy logic (see \cite{pandya2008survey,momoh1999review1,momoh1999review2} and the references therein). However, the existing methods do not offer guaranteed recovery of globally optimal solutions or even feasible points \cite{castillo2013computational}.

One of the most promising approaches to OPF is semidefinite programming (SDP) relaxation, 
which is proven to be exact for a variety of benchmark instances \cite{lavaei2012zero}. 
In general, the solution of SDP relaxation offers a lower bound for the unknown globally optimal cost of OPF. 
In order to address the inexactness of SDP relaxation for challenging instances of OPF (e.g., \cite{bukhsh2013local,lesieutre2011examining,molzahn2016convex}), further investigation and improvement are carried out in \cite{zhang2013geometry,kocuk2016inexactness,coffrin2017strengthening,chen2016bound,louca2014nondegeneracy,madani2015convex,gan2015exact,bose2015quadratically}. Since inexact convex relaxations may not lead to physically meaningful solutions for OPF, alternative strategies are proposed to infer OPF feasible points from inexact convex relaxations. For instance, branch-and-bound algorithms \cite{gopalakrishnan2012global,phan2012lagrangian} iteratively partition search spaces to find tighter relaxations. In \cite{madani2015convex,madani2016promises,natarajan2013penalized}, penalty terms are incorporated into the objective of convex relaxations in order ensure OPF feasibility. Moment relaxation algorithms \cite{molzahn2015solution,josz2015application,molzahn2015sparsity} form hierarchies of SDP relaxations 
to obtain globally optimal solutions for OPF. Most recently, \cite{wei2017optimal} proposes a sequential convex optimization method with the aim of recovering OPF feasible points.

In addition to the exactness issues, SDP relaxation suffers from high computational cost due to the presence of high-order conic constraints. This shortcoming limits the applicability of SDP relaxation especially for large-scale instances of the OPF problem. To overcome this issue and enhance the scalability of SDP relaxation, some studies propose computationally-cheaper relaxations including second-order cone programming (SOCP) \cite{kocuk2016strong,madani2017scalable}, quadratic programming (QP) \cite{coffrin2016qc,marley2017solving}, linear programming (LP) \cite{bienstock2014linear,coffrin2014linear}. Some papers have leveraged the sparsity of power networks to decompose large-scale conic constraints into lower order ones \cite{molzahn2013implementation,andersen2014reduced,bose2015equivalent,madani2016promises,guo2017case}. 
Additionally, several extensions of OPF have been recently studied under more general settings, to address considerations such as the security of operation \cite{madani2016promises,capitanescu2011state,dvorkin2016optimizing}, robustness \cite{dorfler2016breaking}, energy storage \cite{marley2017solving}, distributed platforms \cite{lam2012distributed,dall2013distributed}, and uncertainty of generation \cite{anese2017chance}. 

In this paper, we introduce a novel and computationally-efficient parabolic relaxation and investigate its relation with the common practice SDP and SOCP relaxations. The proposed parabolic relaxation relies on convex quadratic inequalities only, as opposed to conic constraints. A penalization method is introduced for finding feasible and near-globally optimal solutions, which is compatible with the SDP, SOCP, and parabolic relaxations. We offer theoretical guarantees for the recovery of feasible solutions for OPF using penalization.
\subsection{Notations}
Throughout this paper, matrices, vectors, and scalars are represented by bold uppercase, bold lowercase, and italic lowercase letters, respectively. The symbols $\Rbb$, $\Cbb$, and $\Hbb_n$ denote the sets of real numbers, complex numbers, and $n\times n$ Hermitian matrices, respectively. The notation ``$\irm$'' is reserved for the imaginary unit. $|\cdot|$ represents the absolute value of a scalar or the cardinality of a set, depending on the context. The symbols $\!(\cdot)^\ast\!$ and $\!(\cdot)^{\!\top}\!$ represent the conjugate transpose and transpose operators, respectively. 
The notations $\mathbf{I}_n$ and $\mathbf{0}_{m\times n}$ refer to the $n\times n$ identity and $m\times n$ zero matrices, respectively. Given an $n\times 1$ vector $\mathbf{x}$, the notation $[\mathbf{x}]$ refers to the $n\times n$ diagonal matrix with the elements of $\mathbf{x}$ on the diagonal. The symbols $\lambda_{\min}(.)$ and $\lambda_{\max}(.)$ denote the minimum and maximum eigenvalues, respectively. Given a matrix $\Abf$, the notation $A_{jk}$ refers to its $(j,k)$ entry. $\Abf\succeq 0$ means that $\Abf$ is symmetric/Hermitian positive semidefinite. Define $\Abf\{\mathcal{D}\}$ as the sub-matrix of $\Abf$ obtained by choosing the rows that belong to the index set $\mathcal{D}$.
\section{Problem Formulation}
A power network can be modeled as a directed graph $\Hcal\!\!=\!(\Vcal,\Ecal)$, with $\Vcal$ and $\Ecal$ as the set of buses and lines, respectively. For each bus $k\!\in\!\Vcal$, the demand forecast is denoted by $d_k\!\in\!\mathbb{C}$, whose real and imaginary parts account for active and reactive demands, respectively. Define $v_k\!\in\!\mathbb{C}$ as the complex voltage at bus $k$. Let $\Gcal$ be the set of generating units, each located at one of the buses. For each generating unit $g\in\Gcal$, the values $p_g$ and $q_g$, respectively, denote the amount of active and reactive powers. The unit incidence matrix $\Cbf\!\in\!\{0,1\}^{|\Gcal|\times|\Vcal|}$ is defined as a binary matrix whose $\!(g,k)\!$ entry is equal to one, if and only if the generating unit $g$ belongs to bus $k$. Additionally,
define the pair of matrices $\vec{\Cbf},\cev{\Cbf}\!\in\!\{0,1\}^{|\Ecal|\times|\Vcal|}$ as the \textit{from} and \textit{to} incidence matrices, respectively. The $(l,k)$ entry of $\vec{\Cbf}$ is equal to one, if and only if the line $l\!\in\!\Ecal$ starts at bus $k$, while the $(l,k)$ entry of $\cev{\Cbf}$ equals one, if and only if line $l$ ends at bus $k$. Define $\Ybf\!\in\!\Cbb^{|\Vcal|\times|\Vcal|}$ as the nodal admittance matrix of the network and $\vec{\Ybf}\!,{\cev{\Ybf}}\!\in\!\Cbb^{|\Ecal|\times|\Vcal|}$ as the \textit{from} and \textit{to} branch admittance matrices. Define $\ybf_{\shrm}\! =\! \gbf_{\shrm} \!+\! \irm \bbf_{\shrm} \!\in\!\Cbb^{|\Vcal|}$ as the vector of shunt admittances whose real and imaginary parts correspond to the shunt conductances and susceptances, respectively. The OPF problem can be formulated as,
\begin{subequations}\label{eq:OPF}
\begin{align}
& \underset{
\begin{subarray}{c} \!\!\pbf,\qbf\,\in\,\Rbb^{|\Gcal|}\\
             \;\:\,\vbf\,\in\,\Cbb^{|\Vcal|}\\
             \vec{\sbf},\cev{\sbf}\,\in \,\Cbb^{|\Ecal|}
             \end{subarray}
}{\text{minimize~~~}}
		&&\hspace{-0.5cm} h(\pbf)   \label{eq:OPF_obj}\\
& \text{subject to~~~}
		&&\hspace{-0.5cm}\dbf+\mathrm{diag}\{\vbf\;\!\vbf^{\ast}\Ybf^{\ast}\}=\Cbf^{\top}(\pbf+\irm\qbf)\label{eq:OPF_cons_1}\\
        &&&\hspace{-0.5cm}\mathrm{diag}\{\vec{\Cbf}\;\!\vbf\;\!\vbf^{\ast}\vec{\Ybf}^{\ast}\} = \vec{\sbf}\label{eq:OPF_cons_2}\\
		&&&\hspace{-0.5cm}\mathrm{diag}\{\cev{\Cbf}\;\!\vbf\;\!\vbf^{\ast}\cev{\Ybf}^{\ast}\} = \cev{\sbf}\label{eq:OPF_cons_3}\\
        &&&\hspace{-0.5cm}\vbf^2_{\mathrm{min}}\leq\lvert\vbf\rvert^2\leq \vbf^2_{\mathrm{max}}\label{eq:OPF_cons_4} \\
		&&&\hspace{-0.5cm}{\pbf_{\mathrm{min}}} \leq  \:\:\pbf\:\: \leq {\pbf_{\mathrm{max}}}\label{eq:OPF_cons_5}\\ 
		&&&\hspace{-0.5cm}{\qbf_{\mathrm{min}}} \leq  \:\:\qbf\:\: \leq {\qbf_{\mathrm{max}}}\label{eq:OPF_cons_6}\\
		&&&\hspace{-0.5cm}|\vec{\sbf}|^2 \leq \fbf^2_{\mathrm{max}}\label{eq:OPF_cons_7}\\
		&&&\hspace{-0.5cm}|\cev{\sbf}|^2 \leq \fbf^2_{\mathrm{max}}\label{eq:OPF_cons_8}
\end{align}
\end{subequations}
where $h(\pbf)\triangleq\cbf_{0}^{\top}\onebf+\cbf_{1}^{\top}\pbf+\pbf^{\top}[\cbf_{2}]\;\!\pbf$ is the objective function, $\cbf_{0},\cbf_{1},\cbf_{2} \in\Rbb^{|\Gcal|}_{+}$ are the vectors of fixed, linear, and quadratic cost coefficients, respectively.  
Constraint \eqref{eq:OPF_cons_1} is the power balance equation, which accounts for conservation of energy at all buses of the network. Constraint \eqref{eq:OPF_cons_4} ensures that voltage magnitudes remain within pre-specified ranges, given by vectors $\vbf_{\mathrm{min}}, \vbf_{\mathrm{max}}\in\Rbb^{|\Vcal|}$. Power generation vectors are bounded by ${\pbf_{\mathrm{min}}},{\pbf_{\mathrm{max}}}\in\Rbb^{|\Gcal|}$ for active power and ${\qbf_{\mathrm{min}}},{\qbf_{\mathrm{max}}}\in\Rbb^{|\Gcal|}$ for reactive power. The flow of power entering the lines of the network from their starting and ending buses are denoted by $\vec\sbf\in\Cbb^{|\Ecal|}$ and $\cev\sbf\in\Cbb^{|\Ecal|}$, respectively, and upper bounded by the vector of thermal limits $\fbf_{\max}\in\Rbb^{|\Ecal|}$. 
\section{Preliminaries and Sensitivity Analysis}
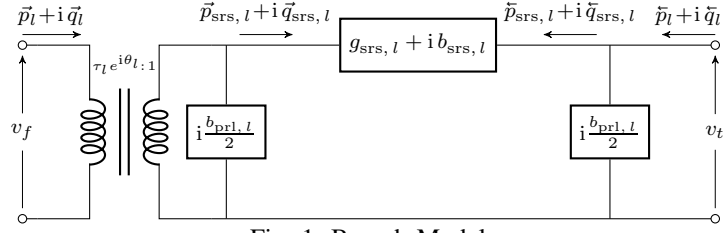
\begin{figure*}[!t]
  \begin{center}
  \vspace{2mm}
    \tikzset{R_F/.style = {draw,thick,minimum size=2em}}
    \begin{circuitikz}[american, cute inductors]
		\draw (0,0) node [scale = 1.1, transformer core] (T){}  (T.base) node[below] {\tiny$\tau_l e^{\irm\theta_l}\!\!:\!1$};
        \draw (T.A1) to [short, -o] ++(-0.2,0) coordinate (TA1);
	    \draw (T.A2) to [short, -o] ++(-0.2,0) coordinate (TA2);
	\path (TA2) -- node (A2A1) {\footnotesize $v_{f}$} (TA1);
		\draw[->,>=stealth',shorten >=4pt, shorten <=0.5pt, very thin] (A2A1) edge[bend left=0]  (TA1);
        \draw[-,shorten >=0.5pt, shorten <=4pt, very thin] (TA2) edge[bend left=0]  (A2A1);
    \draw[->,>=stealth',very thin] ([xshift=0cm,yshift=4pt]TA1) -- node[above] {\footnotesize $\vec{p}_{l}\!+\!\irm\,\vec{q}_{l}$} +(22pt,0);
    \draw (T.B1) to [short] ++(0.2,0) coordinate (B1E);
    \draw (T.B2) to [short] ++(0.2,0) coordinate (B2E);
    
    \path (B1E) -- node[R_F] (r1) {\footnotesize${
g_{\sm{\mathrm{srs}}{6},\,l} +\irm\,
b_{\sm{\mathrm{srs}}{6},\,l}}$} ++(5.1,0) coordinate (E1Y);
        	\draw[-] (r1) edge[bend left=0]  (E1Y);
        	\draw[-] (B1E) edge[bend left=0]  (r1);
            
    \draw (B2E) to [short] ++(5.1,0) coordinate (E2Y);
    \draw[->,>=stealth',very thin] ([xshift=5pt,yshift=4pt]B1E) -- node[above] {\footnotesize ${\vec{p}_{\sm{\mathrm{srs}}{6},\,l}\!+\! \irm \,\vec{q}_{\sm{\mathrm{srs}}{6},\,l}}$} +(20pt,0);
     \draw[->,>=stealth',very thin] ([xshift=-5pt,yshift=4pt]E1Y) -- node[above] {\footnotesize ${\cev{p}_{\sm{\mathrm{srs}}{6},\,l}\!+\! \irm \, \cev{q}_{\sm{\mathrm{srs}}{6},\,l}}$} +(-20pt,0);
    \draw (E1Y) to [short,-o] ++(1.4,0) coordinate (Y1E);
    \draw (E2Y) to [short,-o] ++(1.4,0) coordinate (Y2E);
    
	\path (B1E) -- node[R_F] (r2) {\footnotesize$\irm\frac{b_{\mathrm{prl},\,l}}{2}$} (B2E);
        	\draw[-] (r2) edge[bend left=0]  (B2E);
        	\draw[-] (B1E) edge[bend left=0]  (r2);
            
	\path (E1Y) -- node[R_F] (r3) {\footnotesize$\irm\frac{b_{\mathrm{prl},\,l}}{2}$} (E2Y);
        	\draw[-] (r3) edge[bend left=0]  (E2Y);
        	\draw[-] (E1Y) edge[bend left=0]  (r3);
            
    \draw[->,>=stealth',very thin] ([xshift=0cm,yshift=4pt]Y1E) -- node[above] {\footnotesize $\cev{p}_l\!+\! \irm \,\cev{q}_l$} +(-20pt,0);
	\path (Y2E) -- node (EY1Y2) {\footnotesize $v_{t}$} (Y1E);
        	\draw[->,>=stealth',shorten >=4pt, shorten <=1pt, very thin] (EY1Y2) edge[bend left=0]  (Y1E);
        	\draw[-,shorten >=1pt, shorten <=4pt, very thin] (Y2E) edge[bend left=0]  (EY1Y2);

   \end{circuitikz} 
  \end{center}
    \vspace{-4mm}
  \caption{Branch Model}
  \vspace{-4mm}
  \label{fig:branch}
\end{figure*}
Consider the standard $\pi$-model of line $l=(f,t)\in\mathcal{E}$, with series admittance $y_{\sm{\mathrm{srs},\,l}{6}}\triangleq g_{\sm{\mathrm{srs},\,l}{6}}+\irm\,{b_{\sm{\mathrm{srs},\,l}{6}}}$ and total shunt susceptance $b_{\sm{\mathrm{prl},\,l}{6}}$, in series with a phase shifting transformer whose tap ratio has magnitude $\tau_l$ and phase shift angle $\theta_l$ \cite{zimmerman2011matpower}. The model is shown in Figure \ref{fig:branch}. Define $\vbf_{l}\triangleq \begin{bmatrix}v_{f}, v_{t}\end{bmatrix}^{\!\!\top}\!\!$ as the vector of the complex voltages at the two ends of the line $l$. The active and reactive power flows entering the line $l$ through the \textit{from} and \textit{to} ends of the branch are equal to, 
\begin{subequations} 
\begin{align}
&\vec{p}_{l} = \vec{p}_{\mathrm{srs},\,l},
&& \vec{q}_{l} = \vec{q}_{\mathrm{srs},\,l}+
\frac{b_{\mathrm{prl},\;l}}{2\tau^2_l}|v_f|^2,\label{eq:series_fpower}
\\
&\cev{p}_{l} = \cev{p}_{\mathrm{srs},\,l}, 
&& \cev{q}_{l} = \cev{q}_{\mathrm{srs},\,l}+
\frac{b_{\mathrm{prl},\;l}}{2}|v_t|^2,\label{eq:series_tpower}
\end{align}
\end{subequations}
where
$\vec{p}_{\mathrm{srs},\,l}+\irm\,\vec{q}_{\mathrm{srs},\,l}$ and $\cev{p}_{\mathrm{srs},\,l}+\irm\,\cev{q}_{\mathrm{srs},\,l}$ are complex powers passing through the series element from the two ends. Additionally,
\begin{subequations} 
\begin{align}
&\vec{p}_{\mathrm{srs},\,l} = \vbf_{l}^{\ast}\,\vec{\Ybf}_{\mathrm{p};\,l}^{\phantom{\ast}}\,\vbf_{l}^{\phantom{\ast}},
&& \vec{q}_{\mathrm{srs},\,l} = \vbf_{l}^{\ast}\,\vec{\Ybf}_{\mathrm{q};\,l}^{\phantom{\ast}}\,\vbf_{l}^{\phantom{\ast}},\label{eq:series_fpower}
\\
&\cev{p}_{\mathrm{srs},\,l} = \vbf_{l}^{\ast}\,\cev{\Ybf}_{\mathrm{p};\,l}^{\phantom{\ast}}\,\vbf_{l}^{\phantom{\ast}}, 
&& \cev{q}_{\mathrm{srs},\,l} = \vbf_{l}^{\ast}\,\cev{\Ybf}_{\mathrm{q};\,l}^{\phantom{\ast}}\,\vbf_{l}^{\phantom{\ast}},\label{eq:series_tpower}
\end{align}
\end{subequations}
where, $\vec{\Ybf}_{\mathrm{p};\,l}$, 
$\vec{\Ybf}_{\mathrm{q};\,l}$,
$\cev{\Ybf}_{\mathrm{p};\,l}$, and $\cev{\Ybf}_{\mathrm{q};\,l}$ are given as
\begin{equation}\notag
\begin{aligned}
&\!\vec{\Ybf}_{\mathrm{p};\,l}\!\triangleq\!\!\begin{bmatrix} \frac{g_{\sm{\mathrm{srs}}{6},\,l}}{\tau_l^2} & 
\!\!\frac{e^{\irm\theta_l}\,y_{\sm{\mathrm{srs}}{6},\,l}}{\smallMinus 2\tau_l}\! \\[1.0ex] 
\frac{\!y_{\sm{\mathrm{srs}}{6},\,l}^{\ast}}{\smallMinus2\tau_l e^{\irm\theta_l} } & 0 \\\vspace{-2.5mm}\end{bmatrix}\!\!,
&&\vec{\Ybf}_{\mathrm{q};\,l}\!\triangleq\!\!\begin{bmatrix} \frac{b_{\sm{\mathrm{srs}}{6},\,l}}{\smallMinus\tau_l^2} & \!\!\frac{ e^{\irm\theta_l}\,y_{\sm{\mathrm{srs}}{6},\,l}}{2\tau_l\irm} \\[1.0ex] \frac{ y_{\sm{\mathrm{srs}}{6},\,l}^{\ast}}{\smallMinus 2\tau_l\irm e^{\irm\theta_l}} & 0
\\\vspace{-2.5mm}\end{bmatrix}\!\!,\\
&\!\cev{\Ybf}_{\mathrm{p};\,l}\!\triangleq\!\!\begin{bmatrix} 0 &  
\frac{e^{\irm\theta_l}\,y_{\sm{\mathrm{srs}}{6},\,l}^{\ast}}{\smallMinus 2\tau_l} \\[1.0ex] \frac{ y_{\sm{\mathrm{srs}}{6},\,l}}{\smallMinus2\tau_l e^{\irm\theta_l}}\!\!\! & g_{\sm{\mathrm{srs}}{6},\,l}\\\vspace{-2.5mm}\end{bmatrix}\!\!, 
&&\cev{\Ybf}_{\mathrm{q};\,l}\!\triangleq\!\!\begin{bmatrix} 0 & \frac{ e^{\irm\theta_l}\,y_{\sm{\mathrm{srs}}{6},\,l}^{\ast}}{-2\tau_l\irm} \\[1.0ex]
\frac{ y_{\sm{\mathrm{srs}}{6},\,l}}{2\tau_l\irm e^{\irm\theta_l}} & \smallMinus b_{\sm{\mathrm{srs}}{6},\,l}\\\vspace{-2.5mm}\end{bmatrix}\!\!.
\end{aligned}
\end{equation}
%

The next definition introduces the notion of {\it sensitivity measure} for power systems, which will be used later in the paper. 
\begin{definition}\label{def:sens}
\vspace{1mm}
The sensitivity measure of the power system under study is defined as 
\begin{align}
&\!\!\!P\triangleq  2|\mathcal{N}|+2|\mathcal{L}|+ \|\mathbf{y}_{\mathrm{sh}}\|_2
+\sum_{l\in\mathcal{E}}\big({\frac{|b_{\mathrm{prl},\;l}|}{2\tau_l^2}+\frac{|b_{\mathrm{prl},\;l}|}{2}}\big)
\nonumber\\
&\!\!\!+\sqrt{2}\sum_{l\in\mathcal{E}}\big({
\|\vec{\Ybf}_{\mathrm{p};\,l}\|_1+
\|\cev{\Ybf}_{\mathrm{p};\,l}\|_1+
\|\vec{\Ybf}_{\mathrm{q};\,l}\|_1+
\|\cev{\Ybf}_{\mathrm{q};\,l}\|_1}\big).\!\!\!\!\!
\end{align}
\end{definition}
To derive the optimality conditions 
of the problem \cref{eq:OPF_obj,eq:OPF_cons_1,eq:OPF_cons_2,eq:OPF_cons_3,eq:OPF_cons_4,eq:OPF_cons_5,eq:OPF_cons_6,eq:OPF_cons_7,eq:OPF_cons_8} we define the Jacobian of equality and inequality constraints.
\begin{definition}
For every arbitrary point $\!\xbf\!=\!(\vbf,\pbf\!+\!\irm\qbf,\vec{\sbf},\cev{\sbf})\in\Cbb^{|\Vcal|}\times\Cbb^{|\Gcal|}	\times\Cbb^{|\Ecal|}\times\Cbb^{|\Ecal|}$, the Jacobian of equality constraints \cref{eq:OPF_cons_1,eq:OPF_cons_2,eq:OPF_cons_3} is equal to $\Jbf^{\sm{=}{5}} = \realrm\{\hat{\Jbf}^{\sm{=}{5}}\}$, where
%
	\begin{equation}\notag\footnotesize
	\begin{aligned}
	\hat{\Jbf}^{\sm{=}{5}} \!\triangleq\!\!
	\begin{bmatrix}
	\phantom{\smallMinus} 2\,[[\gbf_{\shrm}]\vbf] &\hspace{-0.1cm} \!\!\smallMinus 2\irm\,[[\gbf_{\shrm}]\vbf] &\hspace{-0.2cm} \smallMinus \Cbf^{\top} &\hspace{-0.2cm} \mathbf{0} &\hspace{-0.2cm} \vec\Cbf^{\top} &\hspace{-0.2cm} \mathbf{0} &\hspace{-0.2cm} \cev\Cbf^{\top} &\hspace{-0.2cm} \mathbf{0}\\[0.5ex]
	\smallMinus 2\,[[\bbf_{\shrm}]\vbf] &\hspace{-0.1cm} \phantom{\smallMinus}\!\!2\irm\,[[\bbf_{\shrm}]\vbf] &\hspace{-0.2cm} \mathbf{0} &\hspace{-0.2cm} \smallMinus \Cbf^{\top} &\hspace{-0.2cm} \mathbf{0} &\hspace{-0.2cm} \vec\Cbf^{\top} &\hspace{-0.2cm} \mathbf{0} &\hspace{-0.2cm} \cev\Cbf^{\top}\\[0.5ex]
	2\phantom{\irm}\vec{\Ubf}_{\!1} &\hspace{-0.1cm} \smallMinus2\phantom{\irm} \vec{\Ubf}_{\!2} &\hspace{-0.2cm} \mathbf{0} & \hspace{-0.2cm} \mathbf{0} &\hspace{-0.2cm} \smallMinus\Ibf_{|\Ecal|} &\hspace{-0.2cm} \mathbf{0} &\hspace{-0.2cm} \mathbf{0} &\hspace{-0.2cm} \mathbf{0}\\[0.5ex]
	2\irm \vec{\Ubf}_{\!1} &\hspace{-0.1cm} \smallMinus2\irm \vec{\Ubf}_{\!2} &\hspace{-0.2cm} \mathbf{0} &\hspace{-0.2cm} \mathbf{0} &\hspace{-0.2cm} \mathbf{0} &\hspace{-0.2cm} \smallMinus\Ibf_{|\Ecal|} &\hspace{-0.2cm} \mathbf{0} &\hspace{-0.2cm} \mathbf{0}\\[0.5ex]
	2\phantom{\irm}\cev{\Ubf}_{\!1} &\hspace{-0.1cm}  \smallMinus2 \phantom{\irm}\cev{\Ubf}_{\!2} &\hspace{-0.2cm} \mathbf{0} &\hspace{-0.2cm} \mathbf{0} &\hspace{-0.2cm} \mathbf{0} &\hspace{-0.2cm} \mathbf{0} & \hspace{-0.2cm}\smallMinus\Ibf_{|\Ecal|} &\hspace{-0.2cm} \mathbf{0}\\[0.5ex]
	2\irm\cev{\Ubf}_{\!1} &\hspace{-0.1cm}  \smallMinus2\irm \cev{\Ubf}_{\!2} &\hspace{-0.2cm} \mathbf{0} &\hspace{-0.2cm} \mathbf{0} &\hspace{-0.2cm} \mathbf{0} &\hspace{-0.2cm} \mathbf{0} &\hspace{-0.2cm} \mathbf{0} &\hspace{-0.2cm} \smallMinus\Ibf_{|\Ecal|}
	\end{bmatrix}
	\end{aligned}
	\end{equation}
%
and matrices $\vec{\Ubf}_{1}$, $\vec{\Ubf}_{2}$, $\cev{\Ubf}_{1}$, and $\cev{\Ubf}_{2}$ are defined as
%
\begin{equation}\notag\small
\begin{aligned} 
\vec{\Ubf}_{1} &\!\triangleq\!\frac{1}{2}([\vbf^{\ast} \vec\Cbf^{\top}\!]\vec{\Ybf}\!+\![\vec\Ybf \vbf]\vec\Cbf), & \vec{\Ubf}_{2} &\!\triangleq\!\frac{1}{2\irm}([\vbf^{\ast} \vec\Cbf^{\top}\!]\vec{\Ybf}\!-\![\vec\Ybf \vbf]\vec\Cbf),\\
\cev{\Ubf}_{1} &\!\triangleq\! \frac{1}{2}([\vbf^{\ast} \cev\Cbf^{\top}\!]\cev{\Ybf}\!+\![\cev\Ybf \vbf]\cev\Cbf), & \cev{\Ubf}_{2} &\!\triangleq\!\frac{1}{2\irm}([\vbf^{\ast} \cev\Cbf^{\top}\!]\cev{\Ybf}\!-\![\cev\Ybf \vbf]\cev\Cbf).
\end{aligned}
\end{equation}
%
Moreover, the Jacobian of inequality constraints \cref{eq:OPF_cons_4,eq:OPF_cons_5,eq:OPF_cons_6,eq:OPF_cons_7,eq:OPF_cons_8} are, respectively, given as
%
	\begin{subequations}
	\begin{align}
	\hspace{-4mm}\Jbf^{\sm{\leq}{5}}_1\!&\triangleq\!
	2\mathrm{real}\{\!\begin{bmatrix}
	[\vbf]&\hspace{-0.0cm} -\irm[\vbf]&\hspace{-0.0cm} \mathbf{0}_{|\mathcal{V}|\times(2|\mathcal{G}|+4|\Ecal|)}
	\end{bmatrix}\!\},\\
	\hspace{-4mm}\Jbf^{\sm{\leq}{5}}_2\!&\triangleq\!
	\begin{bmatrix}
	\mathbf{0}_{|\Gcal|\times(2|\mathcal{V}|)} 
	&\hspace{-0.0cm} \, \Ibf_{|\Gcal|}& \mathbf{0}_{|\Gcal|\times(|\mathcal{G}|+4|\Ecal|)}
	\end{bmatrix},\\
	\hspace{-4mm}\Jbf^{\sm{\leq}{5}}_3\!&\triangleq\!
	\begin{bmatrix}
	\mathbf{0}_{|\Gcal|\times(2|\mathcal{V}|+|\mathcal{G}|)} 
	&\hspace{-0.0cm} \, \Ibf_{|\Gcal|}& \mathbf{0}_{|\Gcal|\times(4|\Ecal|)}
	\end{bmatrix},\\
	\hspace{-4mm}\vec{\Jbf}^{\sm{\leq}{5}}_4\!&\triangleq\!
	2\mathrm{real}\{\!\begin{bmatrix}
	\mathbf{0}_{|\mathcal{E}|\times(2|\mathcal{V}|+2|\mathcal{G}|)}  &\hspace{-0.0cm} 
	[\cev\sbf] &\hspace{-0.1cm} -\irm[\cev\sbf] &\hspace{-0.1cm} \mathbf{0}_{|\mathcal{E}|\times(2|\Ecal|)}
	\end{bmatrix}\!\},\!\!\!\!\!\!	\\
	\hspace{-4mm}\cev{\Jbf}^{\sm{\leq}{5}}_4\!&\triangleq\!
	2\mathrm{real}\{\!\begin{bmatrix}
\mathbf{0}_{|\mathcal{E}|\times(2|\mathcal{V}|+2|\mathcal{G}|+2|\Ecal|)} &\hspace{-0.0cm} [\cev\sbf] &\hspace{-0.0cm} -\irm[\cev\sbf]
	\end{bmatrix}\!\}.
	\end{align}
\end{subequations}
\vspace{.05mm}
\end{definition}
Given a feasible solution and its Jacobian, the well-known linear independence constraint qualification (LICQ) condition is used to characterize well-behaved feasible points.
\begin{definition}[LICQ] \label{def:LICQ}
\vspace{1mm}
Consider a feasible point $(\vbf,\pbf\!+\!\irm\qbf,\vec{\sbf},\cev{\sbf})
\in\Cbb^{|\Vcal|}\!\times\!\Cbb^{|\Gcal|}
\!\times\!\Cbb^{|\Ecal|}\!\times\!\Cbb^{|\Ecal|}$ for the problem \cref{eq:OPF_obj,eq:OPF_cons_1,eq:OPF_cons_2,eq:OPF_cons_3,eq:OPF_cons_4,eq:OPF_cons_5,eq:OPF_cons_6,eq:OPF_cons_7,eq:OPF_cons_8}. The point $(\vbf,\pbf+\irm\qbf,\vec{\sbf},\cev{\sbf})$ is said to satisfy the LICQ condition if the gradient vectors of equality constraints \cref{eq:OPF_cons_1,eq:OPF_cons_2,eq:OPF_cons_3} and those inequality constraints \cref{eq:OPF_cons_4,eq:OPF_cons_5,eq:OPF_cons_6,eq:OPF_cons_7,eq:OPF_cons_8} that are active form a linearly independent set. In other words, the LICQ condition holds, if the matrix
\begin{align}\label{jacob}\small
 &\!\!\!\Jbf_{\mathcal{B}^{\downarrow}_1,\mathcal{B}^{\uparrow}_1,
\mathcal{B}^{\downarrow}_2,\mathcal{B}^{\uparrow}_2, 
\mathcal{B}^{\downarrow}_3,\mathcal{B}^{\uparrow}_3,
\vec{\mathcal{B}}_4,\cev{\mathcal{B}}_4}
(\xbf) =\!
\big[({\Jbf^{\sm{=}{5}}})^{\!\top},
\;\Jbf^{\sm{\leq}{5}}_1\{\mathcal{B}^{\downarrow}_1\cup\mathcal{B}^{\uparrow}_1\}^{\!\top},\nonumber\\
 &\Jbf^{\sm{\leq}{5}}_2\{ \mathcal{B}^{\downarrow}_2\cup\mathcal{B}^{\uparrow}_2\}^{\!\top},
  \;\Jbf^{\sm{\leq}{5}}_3\{ \mathcal{B}^{\downarrow}_3\cup\mathcal{B}^{\uparrow}_3\}^{\!\top},
 \;\vec{\Jbf}^{\sm{\leq}{5}}_4\{
  \vec{\mathcal{B}}_4\}^{\!\top},
  \;\cev{\Jbf}^{\sm{\leq}{5}}_4\{
  \cev{\mathcal{B}}_4\}^{\!\top}\big]^{\!\top}\!\!\!
 \end{align}
is full row rank, where
\begin{equation}\notag\small
\begin{aligned}
&\mathcal{B}^{\downarrow}_1=
\{k\in\mathcal{V} \,|\, |v_k| = v_{\minrm\phantom{\maxrm}\!\!\!\!\!\!\!\!,\,k}
\},
&& \mathcal{B}^{\uparrow}_1=
\{k\in\mathcal{V} \,|\, |v_k| = v_{\maxrm\phantom{\minrm}\!\!\!\!\!\!\!\!,\,k}
\},\\
&\mathcal{B}^{\downarrow}_2=
\{g\in\mathcal{G} \,|\, p_g = p_{\minrm\phantom{\maxrm}\!\!\!\!\!\!\!,\,g}\},
&&\mathcal{B}^{\uparrow}_2=
\{g\in\mathcal{G} \,|\, p_g = p_{\maxrm\phantom{\minrm}\!\!\!\!\!\!\!,\,g}\},\\
&\mathcal{B}^{\downarrow}_3=
\{g\in\mathcal{G} \,|\, q_g = q_{\minrm\phantom{\maxrm}\!\!\!\!\!\!\!,\,g}\},
&&\mathcal{B}^{\uparrow}_3=
\{g\in\mathcal{G} \,|\, q_g = q_{\maxrm\phantom{\minrm}\!\!\!\!\!\!\!,\,g}\},\\
&\vec{\mathcal{B}}_4=
\{\phantom{g}\!\!l\in\mathcal{E} \,|\, |\vec{s}_l| = f_{\maxrm,\,l}\},
&&\cev{\mathcal{B}}_4=
\{\phantom{g}\!\!l\in\mathcal{E} \,|\, |\cev{s}_l| = f_{\maxrm,\,l}\}.
\end{aligned}
\end{equation}
\vspace{0.05mm}
%
\end{definition}

Since the LICQ condition is only defined for feasible points, in this paper, we introduce a generalization of the LICQ condition that is applicable to infeasible points as well. To this end, we first need a measure for the distance between an arbitrary point $\xbf_{\sm{0}{5}}=(\vbf_{\sm{0}{5}},\pbf_{\sm{0}{5}}\!+\!\irm\qbf_{\sm{0}{5}},\vec{\sbf}_{\sm{0}{5}},\cev{\sbf}_{\sm{0}{5}})
	\in\Cbb^{|\Vcal|}\!\times\!\Cbb^{|\Gcal|}
	\!\times\!\Cbb^{|\Ecal|}\!\times\!\Cbb^{|\Ecal|}$ and the feasible set of the OPF problem \cref{eq:OPF_obj,eq:OPF_cons_1,eq:OPF_cons_2,eq:OPF_cons_3,eq:OPF_cons_4,eq:OPF_cons_5,eq:OPF_cons_6,eq:OPF_cons_7,eq:OPF_cons_8}.
\begin{definition}[Feasibility distance]\label{def:feasdist}
\vspace{1mm}
Define the OPF feasible set 
$\Fcal\subset\Cbb^{|\Vcal|}\!\times\!\Cbb^{|\Gcal|}
\!\times\!\Cbb^{|\Ecal|}\!\times\!\Cbb^{|\Ecal|}$
as the set of all $\xbf\!=\!(\vbf,\pbf+\irm\qbf,\vec{\sbf},\cev{\sbf})$ that satisfy \cref{eq:OPF_cons_1,eq:OPF_cons_2,eq:OPF_cons_3,eq:OPF_cons_4,eq:OPF_cons_5,eq:OPF_cons_6,eq:OPF_cons_7,eq:OPF_cons_8}. Moreover, for every arbitrary $\xbf_{\sm{0}{5}}\!=\!(\vbf_{\sm{0}{5}},\sbf_{\sm{0}{5}},\vec{\sbf}_{\sm{0}{5}},\cev{\sbf}_{\sm{0}{5}})
\!\in\Cbb^{|\Vcal|}\!\times\!\Cbb^{|\Gcal|}	\!\times\!\Cbb^{|\Ecal|}\!\times\!\Cbb^{|\Ecal|}$, define the feasibility distance $\delta_{\Mbf}(\xbf_{\sm{0}{5}})$ as
\begin{align}\small
\min_{\xbf\in\mathcal{F}}
\!\left(\|\vbf\!-\!\vbf_{\sm{0}{5}}\|_{\mathbf{M}}^2\!+\!
\|\pbf+\irm\qbf\!-\!\sbf_{\sm{0}{5}}\|^2_2\!+\!
\|\vec{\sbf}\!-\!\vec{\sbf}_{\sm{0}{5}}\|^2_2\!+\!
\|\cev{\sbf}\!-\!\cev{\sbf}_{\sm{0}{5}}\|^2_2\right)^{\frac{1}{2}},\nonumber
\end{align}
where $\Mbf\in\Hbb_{|\mathcal{V}|}$ is arbitrary.
\end{definition}
\begin{definition}[Generalized LICQ] \label{def:GLICQ}
\vspace{1mm}
The point 
$\xbf=(\vbf,\pbf+\irm\qbf,\vec{\sbf},\cev{\sbf})
\in\Cbb^{|\Vcal|}\!\times\!\Cbb^{|\Gcal|}
\!\times\!\Cbb^{|\Ecal|}\!\times\!\Cbb^{|\Ecal|}$
is said to satisfy the Generalized LICQ condition if the matrix
$\Jbf_{\mathcal{B}^{\downarrow}_1,\mathcal{B}^{\uparrow}_1,
\mathcal{B}^{\downarrow}_2,\mathcal{B}^{\uparrow}_2, 
\mathcal{B}^{\downarrow}_3,\mathcal{B}^{\uparrow}_3,
\vec{\mathcal{B}}_4,\cev{\mathcal{B}}_4}(\xbf)$ from the equation \eqref{jacob} is
full row rank, where
\begin{equation}\notag\small
\begin{aligned}
&\mathcal{B}^{\downarrow}_1=
\{k\in\mathcal{V} \,|\, -|v_k|^2 + v^2_{\minrm\phantom{\maxrm}\!\!\!\!\!\!\!\!,\,k} +
\delta_{\Mbf\!}(\xbf)^2 + 2\delta_{\Mbf\!}(\xbf)|v_k|\geq 0\},\\
& \mathcal{B}^{\uparrow}_1=
\{k\in\mathcal{V} \,|\, +|v_k|^2- v^2_{\maxrm\phantom{\minrm}\!\!\!\!\!\!\!\!,\,k}+
\delta_{\Mbf\!}(\xbf)^2 + 2\delta_{\Mbf\!}(\xbf)|v_k|\geq 0\},\\
&\mathcal{B}^{\downarrow}_2=
\{g\in\mathcal{G} \,|\, - p_g + p_{\minrm\phantom{\maxrm}\!\!\!\!\!\!\!,\,g}+\delta_{\Mbf\!}(\xbf)\geq 0\},\\
&\mathcal{B}^{\uparrow}_2=
\{g\in\mathcal{G} \,|\, + p_g - p_{\maxrm\phantom{\minrm}\!\!\!\!\!\!\!,\,g}+\delta_{\Mbf\!}(\xbf)\geq 0\},\\
&\mathcal{B}^{\downarrow}_3=
\{g\in\mathcal{G} \,|\, - q_g + q_{\minrm\phantom{\maxrm}\!\!\!\!\!\!\!,\,g}+\delta_{\Mbf\!}(\xbf)\geq 0\},\\
&\mathcal{B}^{\uparrow}_3=
\{g\in\mathcal{G} \,|\, + q_g - q_{\maxrm\phantom{\minrm}\!\!\!\!\!\!\!,\,g}+\delta_{\Mbf\!}(\xbf)\geq 0\},\\
&\vec{\mathcal{B}}_4=
\{\phantom{g}\!\!l\in\mathcal{E} \,|\, |\vec{s}_l|^2 - f^2_{\maxrm,\,l} +
\delta_{\Mbf\!}(\xbf)^2 + 2 \delta_{\Mbf\!}(\xbf)|\vec{s}_l|\geq 0\},\\
&\cev{\mathcal{B}}_4=
\{\phantom{g}\!\!l\in\mathcal{E} \,|\, |\cev{s}_l|^2 - f^2_{\maxrm,\,l} +
\delta_{\Mbf\!}(\xbf)^2 + 2 \delta_{\Mbf\!}(\xbf)|\cev{s}_l|\geq 0\}.
\end{aligned}
\end{equation}
Additionally, for every $\xbf$ that satisfies the Generalized LICQ condition, define $\sigma(\xbf)$ as the minimum singular value of the matrix $\Jbf_{\mathcal{B}^{\downarrow}_1,\mathcal{B}^{\uparrow}_1,
\mathcal{B}^{\downarrow}_2,\mathcal{B}^{\uparrow}_2, 
\mathcal{B}^{\downarrow}_3,\mathcal{B}^{\uparrow}_3,
\cev{\mathcal{B}}_4,\vec{\mathcal{B}}_4}(\xbf)$.
\end{definition}

\section{Convexification of the OPF Problem}
The proposed relaxation of the OPF problem involves three steps that are detailed in this section.

\vspace{-1mm}
\subsection{Lifting}
The nonlinear constraints \cref{eq:OPF_cons_1,eq:OPF_cons_2,eq:OPF_cons_3,eq:OPF_cons_4} and \cref{eq:OPF_cons_7,eq:OPF_cons_8}, as well as the objective function \cref{eq:OPF_obj} can be cast linearly by lifting the problem to a higher dimensional space. To this end, define the auxiliary variables $\obf,\rbf\in\Rbb^{|\Gcal|}$ and $\vec\fbf,\cev\fbf\in\Rbb^{|\Ecal|}$ accounting for $\pbf^{2}$, $\qbf^{2}$, $|\vec{\sbf}|^{2}$, and $|\cev{\sbf}|^{2}$, respectively. 
Moreover, define the auxiliary matrix variable $\Wbf\in\Hbb_{|\Vcal|}$, accounting for $\vbf\vbf^{\ast}$.
Observe that the constraints \cref{eq:OPF_cons_1,eq:OPF_cons_2,eq:OPF_cons_3,eq:OPF_cons_4} can be cast linearly with respect to $\Wbf\in\Hbb_{|\Vcal|}$. To preserve the relation between the original and lifted formulations, the following additional constraint shall be imposed:

\vspace{-6mm}
\begin{align}\label{eq:ncc}
\Wbf=\vbf\vbf^{\!\ast}\!.
\end{align}

\vspace{-1mm}
\noindent
The non-convexity of the lifted formulation is captured by the above constraint, which is addressed next.

\subsection{Convex Relaxation}
In order to make the OPF problem computationally tractable, it is common practice to relax the non-convex constraint 
\eqref{eq:ncc} to 

\vspace{-6mm}
\begin{align}
\Wbf-\vbf\vbf^{\ast}\in\Ccal,\label{cr}
\end{align}

\vspace{-1mm}
\noindent
where $\Ccal$ is a proper convex cone. In what follows, we discuss two commonly-used conic relaxations, as well as a novel relaxation which transforms the OPF problem \cref{eq:OPF_obj,eq:OPF_cons_1,eq:OPF_cons_2,eq:OPF_cons_3,eq:OPF_cons_4,eq:OPF_cons_5,eq:OPF_cons_6,eq:OPF_cons_7,eq:OPF_cons_8} into a convex quadratically-constrained quadratic program. 

\vspace{0mm}
\subsubsection{SDP Relaxation}
To derive a semidefinite programming (SDP) relaxation for the OPF problem \cref{eq:OPF_obj,eq:OPF_cons_1,eq:OPF_cons_2,eq:OPF_cons_3,eq:OPF_cons_4,eq:OPF_cons_5,eq:OPF_cons_6,eq:OPF_cons_7,eq:OPF_cons_8}, we can use the cone of $|\Vcal|\!\times\! |\Vcal|$ Hermitian positive semidefinite matrices:  
\begin{equation}\label{eq:SDP_Set}\notag
\Ccal_{1}\triangleq\big\{ \Hbf\in\Hbb_{|\Vcal|} \;\big|\; \Hbf\succeq 0 \big\}.
\end{equation}
Unlike the original non-convex problem \cref{eq:OPF_obj,eq:OPF_cons_1,eq:OPF_cons_2,eq:OPF_cons_3,eq:OPF_cons_4,eq:OPF_cons_5,eq:OPF_cons_6,eq:OPF_cons_7,eq:OPF_cons_8}, SDP relaxation is convex and is proven to result in a globally optimal solution for several benchmark cases of OPF \cite{lavaei2012zero}. 
Despite the advantages of SDP relaxation, imposing a high-dimensional conic constraint can be computationally challenging. For sparse QCQP problems, the complexity of solving SDP relaxation can be alleviated through a graph-theoretic analysis, namely tree decomposition \cite{molzahn2013implementation,andersen2014reduced,bose2015equivalent,madani2016promises,guo2017case,zhang2017distributed}. Using a simple greedy algorithm \cite{madani2016promises}, $\Vcal$ can be decomposed into several overlapping subsets $\mathcal{A}_{1}, \mathcal{A}_{2}, \dots, \mathcal{A}_{D} \subseteq \mathcal{V}$, and then the relaxation is formulated in terms of the reduced cone:
\begin{equation}\notag
\Ccal_1^{\mathrm{d}}\triangleq\big\{\Hbf\in\Hbb_{|\Vcal|} \;\big|\; \Hbf\{\mathcal{A}_{k}, \mathcal{A}_{k}\}\succeq 0, \forall k \in \{1, 2, \cdots, D\}\big\},
\end{equation} 
where for each $k$, $\Hbf\{\mathcal{A}_{k}, \mathcal{A}_{k}\}$ represents the $|\mathcal{A}_{k}|\times|\mathcal{A}_{k}|$ principal sub-matrix of $\Hbf$ whose rows and columns are chosen from $\mathcal{A}_{k}$. The above decomposition leads to an equivalent but more tractable formulation of SDP relaxation. Nevertheless, solving large-scale instances of OPF on real-world systems can be still computationally challenging.

\subsubsection{SOCP Relaxation}
A computationally cheaper alternative to SDP relaxation is the second-order cone programming (SOCP) relaxation, which is formulated using the cone
\begin{equation}\label{eq:SOCP_Set}\notag
\begin{aligned}
\Ccal_{2}\triangleq\big\{ \Hbf\in\Hbb_{|\Vcal|} \;\big|\; H_{ii}\geq\! 0,\;H_{ii} H_{jj}\geq|H_{ij}|^2,\;\forall (i,\!j)\!\in\!\Ecal \big\}.
\end{aligned}
\end{equation}
Incorporating $\Ccal_{2}$ into the constraint \eqref{cr} leads to the SOCP relaxation of OPF. Note that although the SDP relaxation is generally tighter, the SOCP relaxation is far more scalable.

\subsubsection{Parabolic Relaxation}
In order to avoid conic constraints, in this paper, we propose a computationally efficient method, regarded as the {\it parabolic relaxation}, which transforms an arbitrary non-convex QCQP into a convex QCQP. The proposed method requires far less computational effort and can serve as an alternative to the common practice SDP and SOCP relaxations for solving large-scale OPF problems. To derive the parabolic relaxation, define: 
\begin{equation}\label{eq:Para_Set}\notag
\begin{aligned}
\Ccal_{3}\triangleq \big\{ \Hbf\in\Hbb_{|\Vcal|} \;\big|\; & H_{ii}\!\geq\! 0,H_{ii}\!+\!H_{jj}\!\geq\! 2\left|\mathrm{real}\{H_{ij}\}\right|\!, \\ &H_{ii}\!+\!H_{jj}\!\geq\! 2\left|\mathrm{imag}\{H_{ij}\}\right|\!,\forall (i,\!j)\!\in\!\Ecal \big\}.
\end{aligned}
\end{equation}
If $\Ccal_3$ is used, the constraint \eqref{cr} transforms to the following convex quadratic inequalities
\begin{subequations}
\begin{align}
\hspace{-0.3cm}|v_{i}-\phantom{i}v_{j}|^2 &\leq W_{ii}\!+\!W_{jj} - \phantom{\irm}(W_{ij}\!+\!W_{ji}) &&\hspace{-0.1cm} \forall (i,\!j)\!\in\!\Ecal \\
\hspace{-0.3cm}|v_{i}+\phantom{i}v_{j}|^2 &\leq W_{ii}\!+\!W_{jj} + \phantom{\irm}(W_{ij}\!+\!W_{ji}) &&\hspace{-0.1cm} \forall (i,\!j)\!\in\!\Ecal  \\
\hspace{-0.3cm}|v_{i}-\irm v_{j}|^2 &\leq W_{ii}\!+\!W_{jj} + \irm(W_{ij}\!-\!W_{ji}) &&\hspace{-0.1cm} \forall (i,\!j)\!\in\!\Ecal  \\
\hspace{-0.3cm}|v_{i}+\irm v_{j}|^2 &\leq W_{ii}\!+\!W_{jj} - \irm(W_{ij}\!-\!W_{ji}) &&\hspace{-0.1cm} \forall (i,\!j)\!\in\!\Ecal\\
\hspace{-0.3cm}|v_{i}|^2 &\leq W_{ii} &&~~~\, \forall i\!\in\!\Vcal 
\end{align}
\end{subequations}
and there is no need to impose conic constraints.

\begin{definition}
For every $k\in\{1,2,3\}$, define $\mathcal{D}_k$ as the dual cone of $\Ccal_{k}$. Observe that the cone of Hermitian positive semidefinite matrices is self-dual, i.e., $\mathcal{D}_1=\mathcal{C}_1$. Moreover, $\mathcal{D}_{2}$ and $\mathcal{D}_{3}$ are, respectively, the sets of $|\mathcal{V}|\times|\mathcal{V}|$ Hermitian scaled-diagonally-dominant (SDD) and diagonally-dominant matrices, defined as,

\begin{subequations}\small
\begin{align}
\mathcal{D}_{2} &\!=\! \Big\{\!\!\!\! \sum_{(i,j)\in\Ecal}[\ebf_{i},\ebf_{j}]\,\Hbf_{ij}\,[\ebf_{i},\ebf_{j}]^{\!\top}\Big| \Hbf_{ij}\!\in\!\Hbb_{\,2},\, \Hbf_{ij}\!\succeq\! 0,\, \forall (i,\!\,j)\!\in\!\Ecal \Big\},\nonumber\\
\mathcal{D}_{3} &\!=\! \Big\{ \Hbf\in\Hbb_{|\Vcal|}\,\Big|\,|H_{ii}| \geq\!\!\sum_{j\in\Vcal\setminus\{i\}}\!|H_{ij}|,\,\forall i\!\in\!\Vcal \Big\},\nonumber
\end{align}
\end{subequations}
where $\{\ebf_{i}\}_{i\in |\Vcal|}$ represents the standard basis for $\Rbb^{|\Vcal|}$. Moreover, the interior of $\mathcal{D}_k$ can be expressed as 
\begin{align}
\!\!\!\mathrm{int}\{\mathcal{D}_k\}\!=\!
\left\{
\mathbf{M}\in\mathbb{H}_{|\mathcal{V}|}\,|\,
\exists\,\varepsilon>0;\, \mathbf{M}-\varepsilon\mathbf{I}_{|\mathcal{V}|}
\!\in\!\mathcal{D}_k\right\},\!\!\!
\end{align}
for every $k\in\{1,2,3\}$.
\end{definition} 

In practice, the aforementioned convex relaxations are not necessarily exact, which means that solutions obtained by solving the relaxed problems may not be feasible for the OPF problem \cref{eq:OPF_obj,eq:OPF_cons_1,eq:OPF_cons_2,eq:OPF_cons_3,eq:OPF_cons_4,eq:OPF_cons_5,eq:OPF_cons_6,eq:OPF_cons_7,eq:OPF_cons_8}. Next, we show that it is possible to resolve this issue and obtain near-optimal feasible points for OPF by incorporating a penalty term into the objective function of SDP, SOCP, and parabolic relaxations. 

\subsection{Penalization}
To address the inexactness of convex relaxations, we revise  objective functions 
by adding linear penalty terms of the form
$\kappa(\Wbf,\obf,\rbf,\vec{\fbf},\cev{\fbf},\vbf,\pbf\!+\!\irm\qbf,\vec{\sbf},\cev{\sbf})$, using which the non-convex constraint \eqref{eq:ncc} is implicitly imposed. 
Given an initial guess $\mathbf{x}_{\sm{0}{5}}=(\vbf_{\sm{0}{5}},\pbf_{\sm{0}{5}}\!+\!\irm\qbf_{\sm{0}{5}},\vec{\sbf}_{\sm{0}{5}},\cev{\sbf}_{\sm{0}{5}})$ for the solution of the OPF problem \cref{eq:OPF_obj,eq:OPF_cons_1,eq:OPF_cons_2,eq:OPF_cons_3,eq:OPF_cons_4,eq:OPF_cons_5,eq:OPF_cons_6,eq:OPF_cons_7,eq:OPF_cons_8}, the following definition introduces a family of penalty terms that guarantee the exactness of relaxation if $\xbf_{0}$ is sufficiently close to the set $\mathcal{F}$.

\begin{definition}
Given an arbitrary initial point
$\xbf_{\sm{0}{5}}=(\vbf_{\sm{0}{5}},\pbf_{\sm{0}{5}}\!+\!\irm\qbf_{\sm{0}{5}},\vec{\sbf}_{\sm{0}{5}},\cev{\sbf}_{\sm{0}{5}})
\in\Cbb^{|\Vcal|}\!\times\!\Cbb^{|\Gcal|}
\!\times\!\mathbb{C}^{|\Ecal|}\!\times\!\Cbb^{|\Ecal|}$,
the penalty function $\kappa_{\Mbf,\xbf_0}$ is defined as follows
\begin{align}\label{eq:penalization}\notag
&\kappa_{\Mbf,\xbf_0}(\Wbf,\obf,\rbf,\vec{\fbf},\cev{\fbf},\vbf,\pbf\!+\!\irm\qbf,\vec{\sbf},\cev{\sbf})\triangleq\nonumber\\
&~~~\,(\obf^{\!\top}\!\onebf -2\,\pbf_{\sm{0}{5}}^{\!\top}\pbf+\pbf_{\sm{0}{5}}^{\!\top}\pbf_{\sm{0}{5}})+(\rbf^{\top}\onebf -2\,\qbf_{\sm{\,0}{5}}^{\!\top}\qbf+\qbf_{\sm{\,0}{5}}^{\!\top}\qbf_{\sm{\,0}{5}})+\nonumber\\ 
&~~~\,(\vec{\fbf}^{\msh\;\top}\!\onebf\msh -\vec{\sbf}_{\sm{0}{5}}^{\ast}\vec{\sbf}\msh-\msh \vec{\sbf}^{\ast}\vec{\sbf}_{\sm{0}{5}}\msh+\msh \vec{\sbf}_{\sm{0}{5}}^{\ast}\vec{\sbf}_{\sm{0}{5}})\!+\!(\cev{\fbf}^{\,\top}\!\onebf\msh -\cev{\sbf}_{\sm{0}{5}}^{\ast}\cev{\sbf}\msh-\msh \cev{\sbf}^{\ast}\cev{\sbf}_{\sm{0}{5}}\msh+\msh\cev{\sbf}_{\sm{0}{5}}^{\ast}\cev{\sbf}_{\sm{0}{5}})+\nonumber\\
&~~~\,\tr\{\Wbf\Mbf\}-\vbf_{\sm{0}{5}}^{\ast}\Mbf\vbf -\vbf^{\ast}\Mbf\vbf_{\sm{0}{5}}+\vbf_{\sm{0}{5}}^{\ast}\Mbf\vbf_{\sm{0}{5}},
\end{align}
where $\mathbf{M}\in\mathbb{H}_{|\Vcal|}$ is regarded as the penalty matrix. 
\end{definition}

Given $k\in\{1,2,3\}$, a penalty matrix $\mathbf{M}$, and $\mu>0$, the \textit{penalized relaxation} problem equipped with the cone $\mathcal{C}_k$ and the penalty term $\mu\times\kappa_{\Mbf,\xbf_0}$ can be formulated as: 
\begin{subequations}\label{eq:OPF_lifted}
	\begin{align}
	&\hspace{-0.6cm}\underset{\begin{subarray}{c} 
\pbf,\qbf,\obf,\rbf\in\Rbb^{|\Gcal|}\\ 
\hspace{3.0mm}\vbf\in\Cbb^{|\msh\Vcal\msh|}, \Wbf\in\Hbb_{|\msh\Vcal\msh|} \\ \hspace{4.2mm}\vec{\sbf},\cev{\sbf}\in\Cbb^{|\msh\Ecal\msh|},\vec{\fbf},\cev{\fbf}\in\Rbb^{|\msh\Ecal\msh|}\end{subarray}
	}{\hspace{-0.1cm}\text{minimize}}
	&&\hspace{-0.6cm} \msh h_{\mathrm{L}\!}(\msh\obf,\msh\pbf\msh)\!+\!\mu\kappa_{\Mbf,\xbf_0}\msh(\Wbf\!,\msh\obf,\msh\rbf,\vec{\fbf}\msh,\msh\cev{\fbf}\msh,\msh\msh\vbf,\msh\pbf\!+\!\irm\qbf,\vec{\sbf},\cev{\sbf})\!\!\!\!
	\label{eq:OPF_lifted_obj}\\
	&\hspace{-0.35cm}\text{~~subject to}
	&&\hspace{-0.625cm}\dbf + \mathrm{diag}\{\Wbf\,\Ybf^{\ast}\}=\Cbf^{\top}(\pbf+\irm\qbf)\label{eq:OPF_lifted_cons_1}\\
	&&&\hspace{-0.625cm}\mathrm{diag}\{\vec{\Cbf}\;\!\Wbf\,\vec{\Ybf}^{\ast}\}={\vec{\sbf}}&&\hspace{-1.3 em}\label{eq:OPF_lifted_cons_2}\\
	&&&\hspace{-0.625cm}\mathrm{diag}\{\cev{\Cbf}\;\!\Wbf\,\cev{\Ybf}^{\ast}\}={\cev{\sbf}}&&\hspace{-1.3 em}\label{eq:OPF_lifted_cons_3}\\
	&&&\hspace{-0.625cm}\vbf_{\mathrm{min}}^{2}\!\leq\mathrm{diag}\{\Wbf\}\leq \vbf_{\mathrm{max}}^{2}\label{eq:OPF_lifted_cons_4}\\
	&&&\hspace{-0.625cm}{\pbf_{\mathrm{min}}} \leq  \pbf \leq {\pbf_{\mathrm{max}}}\label{eq:OPF_lifted_cons_5}\\ 
	&&&\hspace{-0.625cm}{\qbf_{\mathrm{min}}} \leq  \qbf \leq {\qbf_{\mathrm{max}}}\label{eq:OPF_lifted_cons_6}\\
	&&&\hspace{-0.625cm}|\vec{\sbf}|^2 \leq \vec{\fbf} \leq \fbf_{\mathrm{max}}^2\label{eq:OPF_lifted_cons_7}\\
	&&&\hspace{-0.625cm}|\cev{\sbf}|^2 \leq \cev{\fbf} \leq \fbf_{\mathrm{max}}^2\label{eq:OPF_lifted_cons_8}\\
	&&&\hspace{-0.625cm}\pbf^2 \leq \obf\label{eq:OPF_lifted_cons_10}\\
	&&&\hspace{-0.625cm}\Wbf - \vbf\;\!\vbf^{\ast}\!\in \Ccal_k\label{eq:OPF_lifted_cons_11}
	\end{align}
\end{subequations}
where $h_{\mathrm{L}}(\obf,\pbf)\!\!\triangleq\!\cbf_{0}^{\!\top}\onebf\!+\cbf_{1}^{\top}\!\pbf+\cbf_{2}^{\top}\obf$ is the lifted objective function. 
The penalization is said to be tight if the problem \cref{eq:OPF_lifted_obj,eq:OPF_lifted_cons_1,eq:OPF_lifted_cons_2,eq:OPF_lifted_cons_3,eq:OPF_lifted_cons_4,eq:OPF_lifted_cons_5,eq:OPF_lifted_cons_6,eq:OPF_lifted_cons_7,eq:OPF_lifted_cons_8,eq:OPF_lifted_cons_10,eq:OPF_lifted_cons_11} possesses a unique solution that satisfies the equation \!\eqref{eq:ncc}. The tightness of penalization guarantees the recovery of a feasible point for the OPF problem \cref{eq:OPF_obj,eq:OPF_cons_1,eq:OPF_cons_2,eq:OPF_cons_3,eq:OPF_cons_4,eq:OPF_cons_5,eq:OPF_cons_6,eq:OPF_cons_7,eq:OPF_cons_8}.

\section{Theoretical Results}


It is shown in \cite{hauswirth2018generic} that the LICQ condition holds generically for OPF. According to the next theorem, if $\xbf_0$ is a feasible point for the OPF problem \cref{eq:OPF_obj,eq:OPF_cons_1,eq:OPF_cons_2,eq:OPF_cons_3,eq:OPF_cons_4,eq:OPF_cons_5,eq:OPF_cons_6,eq:OPF_cons_7,eq:OPF_cons_8} that satisfies the LICQ condition, then the penalized convex relaxation problem \cref{eq:OPF_lifted_obj,eq:OPF_lifted_cons_1,eq:OPF_lifted_cons_2,eq:OPF_lifted_cons_3,eq:OPF_lifted_cons_4,eq:OPF_lifted_cons_5,eq:OPF_lifted_cons_6,eq:OPF_lifted_cons_7,eq:OPF_lifted_cons_8,eq:OPF_lifted_cons_10,eq:OPF_lifted_cons_11} preserves the feasibility of $\xbf_0$ for appropriate choices of the penalty matrix $\Mbf$ and $\mu$.
\begin{theorem}\label{thm:feas}
\vspace{1mm}
Let $\mathbf{x}_{\sm{0}{5}}=(\vbf_{\sm{0}{5}},\pbf_{\sm{0}{5}}+\irm\qbf_{\sm{0}{5}},\vec{\sbf}_{\sm{0}{5}},\cev{\sbf}_{\sm{0}{5}})\in\mathcal{F}$ be a feasible point for the OPF problem \cref{eq:OPF_obj,eq:OPF_cons_1,eq:OPF_cons_2,eq:OPF_cons_3,eq:OPF_cons_4,eq:OPF_cons_5,eq:OPF_cons_6,eq:OPF_cons_7,eq:OPF_cons_8}, which satisfies the LICQ condition. Assume that $\mathbf{M}\in\mathrm{int}\{\mathcal{D}_k\}$, where $k\in\{1,2,3\}$. If $\mu$ is sufficiently large, then the penalized convex relaxation \cref{eq:OPF_lifted_obj,eq:OPF_lifted_cons_1,eq:OPF_lifted_cons_2,eq:OPF_lifted_cons_3,eq:OPF_lifted_cons_4,eq:OPF_lifted_cons_5,eq:OPF_lifted_cons_6,eq:OPF_lifted_cons_7,eq:OPF_lifted_cons_8,eq:OPF_lifted_cons_10,eq:OPF_lifted_cons_11}, equipped with the cone $\mathcal{C}_k$ and the penalty term $\mu\times\kappa_{\Mbf,\xbf_0}$ has a unique solution
\begin{align}
(\Wbf_{\sm{\mathrm{opt}}{6}},
\obf_{\sm{\mathrm{opt}}{6}},
\rbf_{\sm{\mathrm{opt}}{6}},
\vec{\fbf}_{\sm{\mathrm{opt}}{6}},
\cev{\fbf}_{\sm{\mathrm{opt}}{6}},
\vbf_{\sm{\mathrm{opt}}{6}},
\pbf_{\sm{\mathrm{opt}}{6}},
\qbf_{\sm{\mathrm{opt}}{6}},
\vec{\sbf}_{\sm{\mathrm{opt}}{6}},
\cev{\sbf}_{\sm{\mathrm{opt}}{6}}),\nonumber
\end{align}
such that 
$\mathbf{x}_{\sm{\mathrm{opt}}{6}}\triangleq(\vbf_{\sm{\mathrm{opt}}{6}},\pbf_{\sm{\mathrm{opt}}{6}}\!+\!\irm\qbf_{\sm{\mathrm{opt}}{6}},\vec{\sbf}_{\sm{\mathrm{opt}}{6}},\cev{\sbf}_{\sm{\mathrm{opt}}{6}})$
is feasible for the original OPF problem \cref{eq:OPF_obj,eq:OPF_cons_1,eq:OPF_cons_2,eq:OPF_cons_3,eq:OPF_cons_4,eq:OPF_cons_5,eq:OPF_cons_6,eq:OPF_cons_7,eq:OPF_cons_8} and $h(\pbf_{\sm{\mathrm{opt}}{6}})\leq h(\pbf_{\sm{0}{5}})$.
\vspace{1mm}
\end{theorem}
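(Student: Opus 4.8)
The plan is to use conic duality to make $\kappa_{\Mbf,\xbf_0}$ a nonnegative ``distance-to-$\xbf_0$'' functional on the lifted feasible set, and then to use the LICQ hypothesis, through a complementary-slackness argument, to upgrade the easy asymptotic exactness of the penalized relaxation into exactness at a finite, sufficiently large $\mu$. First I would show that $\kappa_{\Mbf,\xbf_0}\ge 0$ on the feasible set of the penalized relaxation, together with a decomposition into five nonnegative pieces. Since $\Mbf\in\mathcal{D}_k$ is dual to $\Ccal_k$ and the relaxed constraint forces $\Wbf-\vbf\vbf^{\ast}\in\Ccal_k$, we get $\tr\{(\Wbf-\vbf\vbf^{\ast})\Mbf\}\ge 0$; completing the square then shows that the voltage part of the penalty equals $\tr\{(\Wbf-\vbf\vbf^{\ast})\Mbf\}+(\vbf-\vbf_0)^{\ast}\Mbf(\vbf-\vbf_0)\ge 0$, the second term being nonnegative because $\Mbf\in\mathrm{int}\{\mathcal{D}_k\}$ is positive definite. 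The four remaining pieces, such as $\obf^{\top}\onebf-2\pbf_0^{\top}\pbf+\pbf_0^{\top}\pbf_0$, are bounded below by $\|\pbf-\pbf_0\|_2^2$, $\|\qbf-\qbf_0\|_2^2$, $\|\vec{\sbf}-\vec{\sbf}_0\|_2^2$ and $\|\cev{\sbf}-\cev{\sbf}_0\|_2^2$ respectively, thanks to the lifting inequalities $\obf\ge\pbf^2$, $\rbf\ge\qbf^2$, $\vec{\fbf}\ge|\vec{\sbf}|^2$, $\cev{\fbf}\ge|\cev{\sbf}|^2$; each piece vanishes exactly when the corresponding lifting is tight and the corresponding variable equals its value at $\xbf_0$.

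Second, the lift $\tilde{\xbf}_0$ of $\xbf_0$ obtained by setting $\Wbf=\vbf_0\vbf_0^{\ast}$, $\obf=\pbf_0^2$, $\rbf=\qbf_0^2$, $\vec{\fbf}=|\vec{\sbf}_0|^2$, $\cev{\fbf}=|\cev{\sbf}_0|^2$ is feasible for the penalized relaxation, and there the penalty vanishes, so its objective value is $h(\pbf_0)$; hence the optimal value of the penalized relaxation is at most $h(\pbf_0)$. Together with $\kappa_{\Mbf,\xbf_0}\ge 0$ and $h_{\mathrm{L}}(\obf,\pbf)\ge h(\pbf)$ (valid whenever $\obf\ge\pbf^2$, since $\cbf_2$ has nonnegative entries), this makes the cost bound $h(\pbf_{\mathrm{opt}})\le h(\pbf_0)$ automatic for every optimal solution, so only feasibility of $\xbf_{\mathrm{opt}}$ and uniqueness remain. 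On the sublevel set where the objective is at most $h(\pbf_0)$ the relaxation is compact in the coordinates that matter — the lifted versions of the bounds \eqref{eq:OPF_cons_4}--\eqref{eq:OPF_cons_8} pin down $\diagrm\{\Wbf\}$, $\pbf$, $\qbf$, $\vec{\sbf}$, $\cev{\sbf}$, the conic constraint then bounds the entries of $\Wbf$ that are coupled by $\Ccal_k$, and the positive objective coefficients of $\obf,\rbf,\vec{\fbf},\cev{\fbf}$ bound them from above — so minimizers exist; moreover, since the penalty vanishes only at $\tilde{\xbf}_0$, for every $\eta>0$ there is a threshold $\mu(\eta)$ so that for $\mu>\mu(\eta)$ every minimizer lies within $\eta$ of $\tilde{\xbf}_0$.

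The heart of the proof is exactness. For $\xbf_{\mathrm{opt}}$ to be feasible for \eqref{eq:OPF_obj}--\eqref{eq:OPF_cons_8} it suffices that $\Hbf_{\mathrm{opt}}\triangleq\Wbf_{\mathrm{opt}}-\vbf_{\mathrm{opt}}\vbf_{\mathrm{opt}}^{\ast}$ vanish on the sparsity pattern coupled by $\Ccal_k$, since then \eqref{eq:OPF_lifted_cons_1}--\eqref{eq:OPF_lifted_cons_4} collapse to \eqref{eq:OPF_cons_1}--\eqref{eq:OPF_cons_4}. Writing the KKT conditions of the convex penalized relaxation, the dual slack $\Abf$ attached to the constraint $\Wbf-\vbf\vbf^{\ast}\in\Ccal_k$ has the form $\Abf=\mu\Mbf-\Bbf$, where $\Bbf$ gathers the multipliers of the linear constraints \eqref{eq:OPF_lifted_cons_1}--\eqref{eq:OPF_lifted_cons_4} contracted with their $\Wbf$-gradients; those gradients are supported within the pattern coupled by $\Ccal_k$ because $\Ybf$, $\vec{\Ybf}$, $\cev{\Ybf}$ are structured by the graph. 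This is where LICQ enters: LICQ at $\xbf_0$ translates into a constraint qualification for the penalized relaxation near $\tilde{\xbf}_0$, making the relevant multipliers unique and, by continuity as the optimum approaches $\tilde{\xbf}_0$, uniformly bounded in $\mu$, so $\Bbf$ stays bounded. Since $\Mbf\in\mathrm{int}\{\mathcal{D}_k\}$, so that $\Mbf-\varepsilon\Ibf_{|\Vcal|}\in\mathcal{D}_k$ for some $\varepsilon>0$, for $\mu$ large the matrix $\Abf$ again lies in $\mathrm{int}\{\mathcal{D}_k\}$; complementary slackness $\tr\{\Abf\,\Hbf_{\mathrm{opt}}\}=0$ with $\Hbf_{\mathrm{opt}}\in\Ccal_k$ and $\Abf$ interior to the dual cone then forces $\Hbf_{\mathrm{opt}}=\mathbf{0}$ on the relevant pattern, so $\xbf_{\mathrm{opt}}$ is feasible for \eqref{eq:OPF_obj}--\eqref{eq:OPF_cons_8}.

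Finally, for uniqueness, at any optimum the inequalities $\obf\ge\pbf^2$, $\rbf\ge\qbf^2$, $\vec{\fbf}\ge|\vec{\sbf}|^2$, $\cev{\fbf}\ge|\cev{\sbf}|^2$ are tight (their objective coefficients being strictly positive) and, by the previous step, $\Wbf=\vbf\vbf^{\ast}$ on the coupled pattern; therefore the set of optimizers of the penalized relaxation coincides with that of $\min_{\xbf\in\mathcal{F}}\,h(\pbf)+\mu\big(\|\pbf-\pbf_0\|_2^2+\|\qbf-\qbf_0\|_2^2+\|\vec{\sbf}-\vec{\sbf}_0\|_2^2+\|\cev{\sbf}-\cev{\sbf}_0\|_2^2+(\vbf-\vbf_0)^{\ast}\Mbf(\vbf-\vbf_0)\big)$. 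For $\mu$ large all optimizers lie in a small neighborhood of $\xbf_0$, on which LICQ makes $\mathcal{F}$ a smooth manifold of fixed dimension; the added penalty is $\mu$-strongly convex, so for $\mu$ large it dominates the curvature of $\mathcal{F}$ and the Hessian of $h$ on that manifold, yielding a unique local — hence global — minimizer. I expect the complementary-slackness step to be the main obstacle: the arguments above only give $\Hbf_{\mathrm{opt}}\to\mathbf{0}$ as $\mu\to\infty$, and turning this into an exact identity at a finite $\mu$ is precisely what makes it necessary to control the linear-constraint multipliers, i.e., to invoke LICQ.
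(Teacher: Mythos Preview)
The paper does not give a self-contained proof of this theorem; its proof consists entirely of pointers to the external references \cite{madani2018_1} and \cite{QCQP_conic}. So there is no in-paper argument to compare against, and your proposal is effectively a reconstruction of what those references presumably contain.

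Your outline is sound and follows the standard route for penalized-relaxation exactness results: decompose $\kappa_{\Mbf,\xbf_0}$ into five nonnegative pieces via conic duality and the lifting inequalities, use the feasible lift $\tilde{\xbf}_0$ to obtain the cost bound $h(\pbf_{\mathrm{opt}})\le h(\pbf_0)$, and then argue exactness at finite $\mu$ through KKT complementary slackness, with LICQ supplying uniform bounds on the linear-constraint multipliers so that the dual slack $\Abf=\mu\Mbf-\Bbf$ eventually lands in $\mathrm{int}\{\mathcal{D}_k\}$. This is precisely the mechanism one expects from the cited works, and your identification of the complementary-slackness step as the crux is accurate.

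Two points would need tightening in a full write-up. First, the LICQ hypothesis is stated for the \emph{original} OPF constraints \cref{eq:OPF_cons_1,eq:OPF_cons_2,eq:OPF_cons_3,eq:OPF_cons_4,eq:OPF_cons_5,eq:OPF_cons_6,eq:OPF_cons_7,eq:OPF_cons_8}, whereas the KKT system you write is for the \emph{lifted} problem \cref{eq:OPF_lifted_cons_1,eq:OPF_lifted_cons_2,eq:OPF_lifted_cons_3,eq:OPF_lifted_cons_4,eq:OPF_lifted_cons_5,eq:OPF_lifted_cons_6,eq:OPF_lifted_cons_7,eq:OPF_lifted_cons_8,eq:OPF_lifted_cons_10,eq:OPF_lifted_cons_11}; you need an explicit step translating LICQ at $\xbf_0$ into a constraint qualification (and hence multiplier boundedness) for the lifted system near $\tilde{\xbf}_0$. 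Second, the uniform-in-$\mu$ multiplier bound is not quite ``continuity as the optimum approaches $\tilde{\xbf}_0$'': the multipliers themselves depend on $\mu$, so one typically argues by contradiction, extracting a subsequence along which normalized multipliers converge and deriving a violation of LICQ at the limit. Both are routine once flagged, and neither undermines the overall strategy.
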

\begin{proof}
The theorem is proven in \cite{madani2018_1} for the more general case of optimization problems with bilinear matrix inequality (BMI) constraints. The proof for penalized SDP and SOCP relaxations of QCQPs is given in \cite{QCQP_conic} 
\vspace{1mm}
\end{proof}


Obtaining a feasible point for OPF may not be straightforward. The next theorem is concerned with the case where the initial point $\xbf_0$ is not feasible.


\begin{theorem}\label{thm:nfeas}
\vspace{1mm}
Consider an arbitrary point $\xbf_{\sm{0}{5}}=(\vbf_{\sm{0}{5}},\pbf_{\sm{0}{5}}\!+\!\irm\qbf_{\sm{0}{5}},\vec{\sbf}_{\sm{0}{5}},\cev{\sbf}_{\sm{0}{5}})\in\Cbb^{|\Vcal|}\!\times\!\Cbb^{|\Gcal|}
\!\times\!\Cbb^{|\Ecal|}\!\times\!\Cbb^{|\Ecal|}$, which satisfies the Generalized LICQ condition. Assume that $\mathbf{M}-\mathbf{I}_{|\mathcal{V}|}\!\in\!\mathrm{int}\{\mathcal{D}_k\}$ and
\begin{align}
\mathbf{M}\preceq
\left(\frac{\sigma(\xbf_0)}{4 \delta_{\Mbf}(\xbf_0)P}\right) \mathbf{I}_{|\mathcal{V}|},
\end{align}
where $k\in\{1,2,3\}$, and $P$, $\delta_{\Mbf}(\xbf_0)$ and $\sigma(\xbf_0)$ are given by Definitions \ref{def:sens}, \ref{def:feasdist} and \ref{def:GLICQ}, respectively. If $\mu$ is sufficiently large, then the penalized relaxation problem \cref{eq:OPF_lifted_obj,eq:OPF_lifted_cons_1,eq:OPF_lifted_cons_2,eq:OPF_lifted_cons_3,eq:OPF_lifted_cons_4,eq:OPF_lifted_cons_5,eq:OPF_lifted_cons_6,eq:OPF_lifted_cons_7,eq:OPF_lifted_cons_8,eq:OPF_lifted_cons_10,eq:OPF_lifted_cons_11}, equipped with the cone $\mathcal{C}_k$ and the penalty term $\mu\times\kappa_{\Mbf,\xbf_0}$ has a unique solution
\begin{align}
(\Wbf_{\sm{\mathrm{opt}}{6}},
\obf_{\sm{\mathrm{opt}}{6}},
\rbf_{\sm{\mathrm{opt}}{6}},
\vec{\fbf}_{\sm{\mathrm{opt}}{6}},
\cev{\fbf}_{\sm{\mathrm{opt}}{6}},
\vbf_{\sm{\mathrm{opt}}{6}},
\pbf_{\sm{\mathrm{opt}}{6}},
\qbf_{\sm{\mathrm{opt}}{6}},
\vec{\sbf}_{\sm{\mathrm{opt}}{6}},
\cev{\sbf}_{\sm{\mathrm{opt}}{6}}),\nonumber
\end{align}
such that 
$\mathbf{x}_{\sm{\mathrm{opt}}{6}}\triangleq(\vbf_{\sm{\mathrm{opt}}{6}},\pbf_{\sm{\mathrm{opt}}{6}}\!+\!\irm\qbf_{\sm{\mathrm{opt}}{6}},\vec{\sbf}_{\sm{\mathrm{opt}}{6}},\cev{\sbf}_{\sm{\mathrm{opt}}{6}})$
is feasible for the original OPF problem \cref{eq:OPF_obj,eq:OPF_cons_1,eq:OPF_cons_2,eq:OPF_cons_3,eq:OPF_cons_4,eq:OPF_cons_5,eq:OPF_cons_6,eq:OPF_cons_7,eq:OPF_cons_8}.
\vspace{1mm}
\end{theorem}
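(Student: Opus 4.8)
The plan is to reduce the infeasible case to Theorem~\ref{thm:feas} applied at a \emph{nearest feasible point}. First I would fix a point $\xbf^{\star}=(\vbf^{\star},\pbf^{\star}\!+\!\irm\qbf^{\star},\vec{\sbf}^{\star},\cev{\sbf}^{\star})\in\Fcal$ attaining the minimum in Definition~\ref{def:feasdist}, so that the weighted distance from $\xbf^{\star}$ to $\xbf_{\sm{0}{5}}$ equals $\delta_{\Mbf}(\xbf_{\sm{0}{5}})$; such a minimizer exists since $\Fcal$ is closed and the minimization may be restricted to a compact ball about $\xbf_{\sm{0}{5}}$. Two properties of $\xbf^{\star}$ would then be used. (i) Evaluating $\kappa_{\Mbf,\xbf_{\sm{0}{5}}}$ at the lift of $\xbf^{\star}$ (take $\Wbf=\vbf^{\star}\vbf^{\star\ast}$ and $\obf,\rbf,\vec{\fbf},\cev{\fbf}$ equal to the corresponding squared quantities) collapses to the \emph{constant} $\delta_{\Mbf}(\xbf_{\sm{0}{5}})^{2}$, so this lift is feasible for the penalized relaxation with objective value $h(\pbf^{\star})+\mu\,\delta_{\Mbf}(\xbf_{\sm{0}{5}})^{2}$; moreover any other feasible point $\xbf'$ has $\kappa_{\Mbf,\xbf_{\sm{0}{5}}}\geq\delta_{\Mbf}(\xbf_{\sm{0}{5}})^{2}$ at its lift, so for large $\mu$ the relaxation optimum is forced to stay near a nearest feasible point. (ii) $\xbf^{\star}$ satisfies the ordinary LICQ condition of Definition~\ref{def:LICQ}.

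Establishing (ii) is the heart of the argument. Since $\Mbf-\Ibf_{|\Vcal|}\in\mathrm{int}\{\mathcal{D}_k\}$ and $\mathcal{D}_k$ lies in the positive semidefinite cone, we get $\Mbf\succ\Ibf_{|\Vcal|}$, hence every Euclidean distance is dominated by the corresponding $\Mbf$-weighted distance; in particular each coordinate block of $\xbf^{\star}-\xbf_{\sm{0}{5}}$ has Euclidean norm at most $\delta_{\Mbf}(\xbf_{\sm{0}{5}})$. Feeding this through the triangle inequality into the defining inequalities of the index sets $\mathcal{B}^{\downarrow}_1,\mathcal{B}^{\uparrow}_1,\dots,\vec{\mathcal{B}}_4,\cev{\mathcal{B}}_4$ of Definition~\ref{def:GLICQ} shows that every OPF constraint active at $\xbf^{\star}$ has its index in the corresponding Generalized-LICQ set at $\xbf_{\sm{0}{5}}$ --- this is exactly what those $\delta_{\Mbf}$-inflated inequalities are built to guarantee. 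Consequently $\Jbf_{\mathrm{active}(\xbf^{\star})}(\xbf^{\star})$ is a row-submatrix of $\Jbf_{\mathcal{B}^{\downarrow}_1,\dots,\cev{\mathcal{B}}_4}(\xbf^{\star})$, so Cauchy interlacing applied to $\Jbf\Jbf^{\!\top}$ gives $\sigma_{\min}(\Jbf_{\mathrm{active}(\xbf^{\star})}(\xbf^{\star}))\geq\sigma_{\min}(\Jbf_{\mathcal{B}^{\downarrow}_1,\dots,\cev{\mathcal{B}}_4}(\xbf^{\star}))$; and a first-order perturbation estimate, using the sensitivity measure $P$ of Definition~\ref{def:sens} as a Lipschitz constant for the (affine-in-$\xbf$) Jacobian entries together with $\|\xbf^{\star}-\xbf_{\sm{0}{5}}\|\leq\delta_{\Mbf}(\xbf_{\sm{0}{5}})$, gives a lower bound of the form $\sigma(\xbf_{\sm{0}{5}})-2P\,\delta_{\Mbf}(\xbf_{\sm{0}{5}})$, which is $\geq\sigma(\xbf_{\sm{0}{5}})/2>0$ precisely because the hypothesis $\Mbf\preceq(\sigma(\xbf_{\sm{0}{5}})/(4\,\delta_{\Mbf}(\xbf_{\sm{0}{5}})P))\Ibf_{|\Vcal|}$, combined with $\Mbf\succ\Ibf_{|\Vcal|}$, forces $4P\,\delta_{\Mbf}(\xbf_{\sm{0}{5}})<\sigma(\xbf_{\sm{0}{5}})$. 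Hence LICQ holds at $\xbf^{\star}$.

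Finally I would run the penalized-relaxation machinery of Theorem~\ref{thm:feas} with $\xbf^{\star}$ playing the role of the feasible anchor, the only difference being that the penalty is centred at the nearby point $\xbf_{\sm{0}{5}}$ rather than at $\xbf^{\star}$ itself. Forming the Lagrangian dual of the convex program \cref{eq:OPF_lifted_obj,eq:OPF_lifted_cons_1,eq:OPF_lifted_cons_2,eq:OPF_lifted_cons_3,eq:OPF_lifted_cons_4,eq:OPF_lifted_cons_5,eq:OPF_lifted_cons_6,eq:OPF_lifted_cons_7,eq:OPF_lifted_cons_8,eq:OPF_lifted_cons_10,eq:OPF_lifted_cons_11}, the linear penalty contributes the fixed matrix $\mu\Mbf\in\mathrm{int}\{\mathcal{D}_k\}$ to the dual slack associated with \eqref{eq:OPF_lifted_cons_11}, while the remaining contribution is an affine function of the multipliers of the OPF (in)equality constraints. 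LICQ at $\xbf^{\star}$ --- propagated to a fixed-radius neighbourhood of $\xbf^{\star}$ (the radius again guaranteed by the $\Mbf$-bound), which contains the relaxation optimum once $\mu$ is large by property (i) --- bounds those multipliers uniformly in $\mu$, so for $\mu$ large the dual slack lies in $\mathrm{int}\{\mathcal{D}_k\}$. Complementary slackness then forces the non-convexity residual $\Wbf_{\sm{\mathrm{opt}}{6}}-\vbf_{\sm{\mathrm{opt}}{6}}\vbf_{\sm{\mathrm{opt}}{6}}^{\ast}\in\mathcal{C}_k$ to vanish, and likewise the scalar residuals $\obf_{\sm{\mathrm{opt}}{6}}-\pbf_{\sm{\mathrm{opt}}{6}}^{2}$, $\vec{\fbf}_{\sm{\mathrm{opt}}{6}}-|\vec{\sbf}_{\sm{\mathrm{opt}}{6}}|^{2}$, $\cev{\fbf}_{\sm{\mathrm{opt}}{6}}-|\cev{\sbf}_{\sm{\mathrm{opt}}{6}}|^{2}$; hence $\xbf_{\sm{\mathrm{opt}}{6}}$ satisfies \eqref{eq:ncc} and all of \cref{eq:OPF_cons_1,eq:OPF_cons_2,eq:OPF_cons_3,eq:OPF_cons_4,eq:OPF_cons_5,eq:OPF_cons_6,eq:OPF_cons_7,eq:OPF_cons_8}, i.e.\ it is OPF-feasible, while uniqueness follows from the strict convexity supplied by $\mu\Mbf\in\mathrm{int}\{\mathcal{D}_k\}$ in the residual directions together with the non-degeneracy from LICQ. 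The step I expect to be the main obstacle is this last one, namely the uniform-in-$\mu$ control of the OPF multipliers at the relaxation optimum, since one must show the optimum does not wander outside the LICQ-stable neighbourhood of $\xbf^{\star}$; the quantitative hypothesis on $\Mbf$ is calibrated for exactly this, and the remaining estimates coincide with the penalized-conic-relaxation analysis of \cite{madani2018_1,QCQP_conic}, to which the detailed verification can be deferred.
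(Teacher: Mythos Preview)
The paper does not give a self-contained proof of this theorem: its entire proof reads ``The proof can be found in \cite{madani2018_1}.'' There is therefore no in-paper argument to compare your sketch against. Your proposal is a plausible reconstruction of what the deferred argument should look like, and it is clearly built to exploit exactly the definitions the paper sets up (the $\delta_{\Mbf}$-inflated index sets in Definition~\ref{def:GLICQ}, the sensitivity measure $P$ as a Lipschitz constant for the Jacobian, and the reduction to Theorem~\ref{thm:feas} at a nearest feasible point). You yourself close by deferring the detailed multiplier estimates to \cite{madani2018_1,QCQP_conic}, which matches the paper's own deferral.

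Two minor points worth tightening if you flesh this out. First, your claim that evaluating $\kappa_{\Mbf,\xbf_{0}}$ at the lift of $\xbf^{\star}$ gives exactly $\delta_{\Mbf}(\xbf_{0})^{2}$ is correct by construction of the penalty, but the companion claim that \emph{every} feasible lift has penalty value $\geq\delta_{\Mbf}(\xbf_{0})^{2}$ is only immediate for rank-one lifts (i.e., points already satisfying $\Wbf=\vbf\vbf^{\ast}$); for general relaxation-feasible points the inequality need not hold, so your localisation argument in the last paragraph must rely on the dual slack / complementary slackness mechanism rather than on a direct penalty lower bound. Second, the perturbation step ``$\sigma(\xbf_{0})-2P\,\delta_{\Mbf}(\xbf_{0})$'' presumes that $P$ bounds the operator-norm change of the full stacked Jacobian under a displacement of size $\delta_{\Mbf}(\xbf_{0})$; the constant $P$ in Definition~\ref{def:sens} is an aggregate of entrywise norms, so the bookkeeping that turns it into the right matrix-norm Lipschitz constant (and explains the factor $4$ rather than $2$ in the hypothesis) is exactly the calculation that lives in \cite{madani2018_1}.
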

\begin{proof}
The proof can be found in {\cite{madani2018_1}}.
\vspace{1mm}
\end{proof}


\subsection{Choice of the Penalty Matrix}
Motivated by the previous literature \cite{madani2016promises,madani2015convex}, we propose to choose $\Mbf$ such that the term $\tr\{\Wbf\Mbf\}$ in the penalty function reduces the apparent power loss over the series element of every line in the network. 
According to \cref{eq:series_fpower,eq:series_tpower}, the apparent power loss over the series admittance $y_{s}$ can be expressed in terms of $\vbf_{l}$ and the admittance matrices $\vec{\Ybf}_{\mathrm{p};\,l}$, 
$\vec{\Ybf}_{\mathrm{q};\,l}$,
$\cev{\Ybf}_{\mathrm{p};\,l}$, and $\cev{\Ybf}_{\mathrm{q};\,l}$. Hence, in order to penalize the apparent power loss over all lines of the network, we choose the matrix $\Mbf$ as, 
\begin{equation}\label{eqM}
\begin{aligned}
\Mbf =\!\!\!\sum_{(i,j)\in\Ecal} [\ebf_{i}, \ebf_{j}](\Mbf_{ij}\!+\!\alpha\Ibf_{2})[\ebf_{i}, \ebf_{j}]^{\!\top}\!,
\end{aligned}
\end{equation}
where $\ebf_{1},\ldots,\ebf_{n}$ denote the standard basis for $\mathbb{R}^n$, $\alpha$ is a positive constant and each $\Mbf_{ij}$ is a $2\times 2$ positive semidefinite matrix defined as,
\begin{equation}\label{eqMij}
\begin{aligned}
\Mbf_{ij} = 
\zeta_{ij}(\vec{\Ybf}_{\mathrm{q};\,l}+
\cev{\Ybf}_{\mathrm{q};\,l})
+\frac{\eta}{1\!-\!\eta}(\vec{\Ybf}_{\mathrm{p};\,l}+\cev{\Ybf}_{\mathrm{p};\,l}).
\end{aligned}
\end{equation}
The parameter $\eta>0$ sets the trade-off between active and reactive loss minimization and $\zeta_{ij}\in\{-1,+1\}$ is determined based on the inductive or capacitive behavior of the line $l\!\in\!\Ecal$. More precisely, we set $\zeta_{ij}=1$ if the series admittance $y_{\sm{\mathrm{srs},\,l}{6}}$ is inductive (i.e., $b_{\sm{\mathrm{srs},\,l}{6}}\leq 0$), and $\zeta_{ij}=-1$, otherwise. 
Observe that if $\alpha$ is sufficiently large, then $\Mbf$ belongs to the relative interior of the dual cones $\mathcal{D}_1$, $\mathcal{D}_2$, and $\mathcal{D}_3$.
\subsection{Sequential Convex Relaxation}
The penalized convex relaxation 
\cref{eq:OPF_lifted_obj,eq:OPF_lifted_cons_1,eq:OPF_lifted_cons_2,eq:OPF_lifted_cons_3,eq:OPF_lifted_cons_4,eq:OPF_lifted_cons_5,eq:OPF_lifted_cons_6,eq:OPF_lifted_cons_7,eq:OPF_lifted_cons_8,eq:OPF_lifted_cons_10,eq:OPF_lifted_cons_11} can be solved sequentially to find near-globally optimal solutions for OPF. The details of this sequential procedure are delineated  by Algorithm \ref{al:alg_1}. According to Theorem \ref{thm:feas}, once a feasible point for the OPF problem 
\cref{eq:OPF_obj,eq:OPF_cons_1,eq:OPF_cons_2,eq:OPF_cons_3,eq:OPF_cons_4,eq:OPF_cons_5,eq:OPF_cons_6,eq:OPF_cons_7,eq:OPF_cons_8}
is obtained, feasibility is preserved, and the objective value improves in each round.
\begin{algorithm}
 \caption{Sequential Penalized Convex Relaxation.}\label{alg:1}
 \vspace{0.2mm}
 \begin{algorithmic}[1]
 \Require {$k\in\{1,2,3\}$,\; $\Mbf\in\mathrm{int}\{\mathcal{D}_k\}$,\; $\mu>0$,\; and
\Statex \hspace{5.4mm}$\mathbf{x}_{\sm{0}{5}}\!=\!(\vbf_{\sm{0}{5}},\pbf_{\sm{0}{5}}\!+\!\irm\qbf_{\sm{0}{5}},\vec\sbf_{\sm{0}{5}},\cev\sbf_{\sm{0}{5}})\in\Cbb^{|\Vcal|}\!\times\!\Cbb^{|\Gcal|}
\!\times\!\mathbb{C}^{|\Ecal|}\!\times\!\Cbb^{|\Ecal|}$}
 \Repeat
\State Obtain
$\xbf_{\sm{\mathrm{opt}}{6}}=(\vbf_{\sm{\mathrm{opt}}{6}},
 \pbf_{\sm{\mathrm{opt}}{6}}+ \irm\qbf_{\sm{\mathrm{opt}}{6}}, \vec\sbf_{\sm{\mathrm{opt}}{6}},
 \cev\sbf_{\sm{\mathrm{opt}}{6}})$ 
by solv- 
\Statex \hspace{5.4mm}ing the optimization \cref{eq:OPF_lifted_obj,eq:OPF_lifted_cons_1,eq:OPF_lifted_cons_2,eq:OPF_lifted_cons_3,eq:OPF_lifted_cons_4,eq:OPF_lifted_cons_5,eq:OPF_lifted_cons_6,eq:OPF_lifted_cons_7,eq:OPF_lifted_cons_8,eq:OPF_lifted_cons_10,eq:OPF_lifted_cons_11}, equipped with the
\Statex \hspace{5.4mm}cone $\mathcal{C}_k$ and the penalty term $\mu\times\kappa_{\Mbf,\xbf_0}$.
 \State $\xbf_{\sm{0}{5}}\gets\xbf_{\sm{\mathrm{opt}}{6}}$.
 \Until {stopping criteria is met.}
 \Ensure {$\mathbf{x}_{\sm{0}{5}}=(\vbf_{\sm{0}{5}},\pbf_{\sm{0}{5}}\!+\!\irm\qbf_{\sm{0}{5}},\vec\sbf_{\sm{0}{5}},\cev\sbf_{\sm{0}{5}})$}
 \end{algorithmic}\label{al:alg_1}
 \end{algorithm}
 \vspace{-3.0mm}

\section{Experimental Results}
In this section, we detail our experiments for verifying the efficacy of the proposed methods. We consider the IEEE and European test cases from \matpower \cite{zimmerman2011matpower}, modified-IEEE test cases from \cite{bukhsh2013local}, and test cases from the NESTA v0.7.0 archive \cite{coffrin2014nesta}. All numerical experiments are performed in MATLAB using a 64-bit computer with an Intel 3.0 GHz, 12-core CPU, and 256 GB RAM. The CVX package version 3.0, SDPT3 version 4.0, and MOSEK version 8.0 are used for convex optimization.

\begin{table}
\vspace{2mm}
\caption{The lower bounds and run times of parabolic relaxation compared to the SDP and SOCP relaxations.}
\scriptsize
	\centering
	\begin{tabular}{|c| @{\,}|@{\!\!\:\,}c@{\,}|@{\,}c@{\,}|@{\,}| @{\,}c@{\,}|@{\,}c@{\,}|@{\,}| @{\,}c@{\,}|@{\,}c@{\,}|}
		\hline
        \multirow{2}{*}{Test Cases} & \multicolumn{2}{c|@{\,}|@{\,}}{SDP}& \multicolumn{2}{c|@{\,}|@{\,}}{SOCP}& \multicolumn{2}{c@{\,}|}{Parabolic}\\
\cline{2-7}
& $\sm{\mathrm{LB}}{6}$ & $\sm{\mathrm{time}}{6}$ & $\sm{\mathrm{LB}}{6}$ & \sm{\mathrm{time}}{6} & \sm{\mathrm{LB}}{6} & $\sm{\mathrm{time}}{6}$ \\
		\hline
		\hline
         9  	 & 5296.69 & 1.14 & 5296.67 & 0.52  & 5216.03 & 0.66 \\ \hline
         14 	 & 8081.53 & 0.81 & 8075.12 & 0.51 & 7642.59 & 0.59 \\ \hline
         30 	 & 576.89  & 1.12 & 573.58 & 0.64 & 565.21 & 0.65 \\ \hline
         39  	 & 41862.08 & 0.96 & 41854.65 & 0.54 & 41216.34 & 0.79 \\ \hline
         57      & 41737.79 & 1.98 & 41711.01 & 0.92 & 41006.74 & 0.90\\ \hline
         118  	 & 129654.63 & 2.53 & 129341.96 & 1.68 & 125947.88 & 1.14\\ \hline
         300  	 & 719711.69 & 6.56 & 718654.29 & 5.83 & 705814.84 & 2.64\\ \hline
		 89pegase      & 5819.67  & 5.69 & 5810.17 & 2.91 & 5730.95 & 1.59\\ \hline
		 1354pegase    & 74062.53 &577.57 & 74012.39 &14.98 & 73027.96 &9.98\\ \hline
         2869pegase    & 133988.93 &4267.37 & 133880.03 &32.33 & 132381.10 &24.91\\ \hline
	\end{tabular} 
	\label{tab:relaxation_quality_tb}
    \vspace{-2.5mm}
\end{table}


Table \ref{tab:relaxation_quality_tb} reports the optimal objective values for SDP, SOCP and parabolic relaxations 
for a number of IEEE and European benchmark systems. The lower bounds obtained from parabolic relaxation are close to the lower bounds offered by SDP and SOCP relaxations. Additionally, the running times for solving convex relaxations using MOSEK 8.0 are reported by the table. 
\tikzset{
    master/.style={
        execute at end picture={
            \coordinate (lower right) at (current bounding box.south east);
            \coordinate (upper left) at (current bounding box.north west);
        }
    },
    slave/.style={
        execute at end picture={
            \pgfresetboundingbox
            \path (upper left) rectangle (lower right);
        }
    }
}
\begin{figure}
\vspace{0.08cm}
\captionsetup[subfigure]{position=b}
\centering
\begin{tikzpicture}[master,scale=0.85]
\centering
\begin{axis}[
		width = 0.52\textwidth,
		height = 0.29\textwidth,
        xmode= normal,
		ymode= log,	
    	ylabel= {$\tr\{\Wbf\!-\!\vbf\vbf^{\ast}\}$},
    	ylabel shift= -0.2cm,
 		xmin= 0,
  		xmax= 14000,
        ymax= 0.1,
        xtick= \empty,
        xshift = -1mm,
    	grid style = dashed,
        legend pos = north east,
		legend cell align = {left},
        legend image post style = {scale=0.7},
        legend style={font=\scriptsize, inner xsep=0.2pt, inner ysep=0.1pt},
]
\draw[<->, densely dotted, line width= 0.05cm, Fcolor] (axis cs:400,0.003)--(axis cs:13750,0.003) node [black, pos=0.55, above] {\footnotesize \raisebox{-0.2ex}[0pt][-0.2ex]{$\Wbf\!=\!\vbf\vbf^{\ast}$}};
\draw[<->, densely dotted, line width= 0.05cm, Fcolor] (axis cs:1250,0.0056)--(axis cs:13750,0.0056) node [black, pos=0.5, above] {\footnotesize \raisebox{-0.2ex}[0pt][-0.2ex]{$\Wbf\!=\!\vbf\vbf^{\ast}$}};
\draw[<->, densely dotted, line width= 0.05cm, Fcolor] (axis cs:6400,0.01)--(axis cs:13750,0.01) node [black, pos=0.5, above] {\footnotesize \raisebox{-0.2ex}[0pt][-0.2ex]{$\Wbf\!=\!\vbf\vbf^{\ast}$}};
\addplot[color=red,line width=0.5mm] table[x=X,y=YSDP,col sep=comma] {5pjm.csv};
\addplot[dashed,color=blue,line width=0.5mm] table[x=X,y=YSOCP,col sep=comma] {5pjm.csv};
\addplot[dash dot,color=black,line width=0.5mm] table[x=X,y=YPara,col sep=comma] {5pjm.csv};

\legend{$\mathrm{SDP}$,$\mathrm{SOCP}$,$\mathrm{Parabolic}$}
\end{axis}
\end{tikzpicture}
\begin{tikzpicture}[slave,scale=0.85]
\centering
\begin{axis}[
		width = 0.52\textwidth,
		height = 0.29\textwidth,
        xmode=normal,
		ymode=normal,	
		ylabel shift = -0.1cm,
  		xmin = 0,
 		xmax = 14000,
        xtickmax = 13000,
        ymin = 14700,
        xtick scale label code/.code={\pgfmathparse{int(#1)}$\mu \times 10^{-\pgfmathresult}$},
 		every x tick scale label/.style={at={(xticklabel cs:0.5)}, anchor = north},
        ytick scale label code/.code={\pgfmathparse{int(#1)}$h_{\mathrm{L}}(\obf,\pbf) \times 10^{-\pgfmathresult}$},
        every y tick scale label/.style={at={(yticklabel cs:0.5)}, anchor = south, rotate = 90},  
    	legend pos=south east,
    	grid style=dashed,
		legend cell align={left},
        legend image post style={scale=0.7},
        legend style={font=\scriptsize, inner xsep=0.2pt, inner ysep=0.1pt},
]
\draw[<->, densely dotted, line width= 0.06cm, Fcolor] (axis cs:400,16675)--(axis cs:13750,16675) node [black, pos=0.6, above] {\footnotesize \raisebox{-0.2ex}[0pt][-0.2ex]{$\Wbf\!=\!\vbf\vbf^{\ast}$}};
\draw[<->, densely dotted, line width= 0.06cm, Fcolor] (axis cs:1250,17305)--(axis cs:13750,17305) node [black, pos=0.6, above] {\footnotesize \raisebox{-0.2ex}[0pt][-0.2ex]{$\Wbf\!=\!\vbf\vbf^{\ast}$}};
\draw[<->, densely dotted, line width= 0.06cm, Fcolor] (axis cs:6400,18010)--(axis cs:13750,18010) node [black, pos=0.53, above] {\footnotesize \raisebox{-0.2ex}[0pt][-0.2ex]{$\Wbf\!=\!\vbf\vbf^{\ast}$}};

\addplot[color=red,line width=0.5mm] table[x=X,y=CSDP,col sep=comma] {5pjm.csv};
\addplot[dashed,color=blue,line width=0.5mm] table[x=X,y=CSOCP,col sep=comma] {5pjm.csv};
\addplot[dash dot,color=black,line width=0.5mm] table[x=X,y=CPara,col sep=comma] {5pjm.csv};

\legend{$\mathrm{SDP}$,$\mathrm{SOCP}$,$\mathrm{Parabolic}$}
\end{axis}
\end{tikzpicture}
\vspace{8mm}
\caption{Behavior analysis of the penalized convex relaxations for different choices of $\mu$ (nesta\_case5\_pjm \cite{coffrin2014nesta}). Top: feasibility violation; Bottom: resulting cost values}
\vspace{-0.5mm}
\label{plt:plot_feas_cost}
\end{figure}

To evaluate the sensitivity of the penalized SDP, SOCP, and parabolic relaxations to the choice of penalty parameter $\mu$, we solve the penalized convex relaxation of OPF for the benchmark system nesta\_case5\_pjm from \cite{coffrin2014nesta}, for different values of $\mu$. The results are shown in Figure \ref{plt:plot_feas_cost}.
For this benchmark case, the best-known feasible cost is equal to $17551.89$ \cite{coffrin2014nesta}. The minimum values of $\mu$ that offers tight penalization and its resulting percentage gap with the best-known cost value are, respectively, equal to $213.60$ and $0.08\%$ for SDP relaxation, $1288.88$ and $0.29\%$ SOCP relaxation, and $6628.91$ and $1.02\%$ for parabolic relaxation.  For this experiment, the parameters $\alpha$ and $\eta$ in the equations \eqref{eqM} and \eqref{eqMij} are set to $5$ and $0$, respectively.
According to Figure \ref{plt:plot_feas_cost}, all of the proposed penalized convex relaxations result in near-globally optimal points for a wide range of $\mu$ values. 
As shown by the figure at the bottom, a smaller choice of $\mu$ leads to a lower objective values. The smallest value of $\mu$, which produces a feasible solution for OPF is greater for parabolic relaxation compared to that of SDP and SOCP relaxations.
\begin{table*}
\vspace{0.2cm}
\caption{Result summary for several benchmark systems.}
\scriptsize
	\centering
    \scalebox{1}{
	\begin{tabular}{|@{\,}c@{\,}
    |@{\,}|
    @{\!\!\:\,}c@{\,}| @{\,}c@{\,}| @{\,}c@{\,}| @{\,}c@{\,}| @{\,}c@{\,}| @{\,}c@{\,}| @{\,}c@{\,}| @{\,}c@{\,}
    |@{\,}|
    @{\,}c@{\,}| @{\,}c@{\,}| @{\,}c@{\,}| @{\,}c@{\,}| @{\,}c@{\,}| @{\,}c@{\,}| @{\,}c@{\,}| @{\,}c@{\,}
    | @{\,}|
    @{\,}c@{\,}| @{\,}c@{\,}| @{\,}c@{\,}| @{\,}c@{\,}| @{\,}c@{\,}| @{\,}c@{\,}| @{\,}c@{\,}| @{\,}c@{\,}
    |@{\,}|
    @{\!\!\;\,}c@{\,}|}
	\hline
	\multirow{2}{*}{Test Cases} & \multicolumn{8}{c|@{\,}|@{\,}}{SDP} & \multicolumn{8}{c|@{\,}|@{\,}}{SOCP}& \multicolumn{8}{c|@{\,}|@{\!\!\;\,} }{Parabolic}& \multirow{2}{*}{$c_{s}$}\\
		\cline{2-25}
 		& $\sm{\mu}{6}$ & $\sm{\alpha}{6}$  & $\sm{k_{f}}{6}$ & \sm{\text{GFB\%}}{6} & \sm{\text{GFS\%}}{6} & $\sm{k_{p}}{6}$ & \sm{\text{GPB\%}}{6} & \sm{\text{GPS\%}}{6} & $\sm{\mu}{6}$ & $\sm{\alpha}{6}$ & $\sm{k_{f}}{6}$ & \sm{\text{GFB\%}}{6} & \sm{\text{GFS\%}}{6} & $\sm{k_{p}}{6}$ & \sm{\text{GPB\%}}{6} & \sm{\text{GPS\%}}{6} & $\sm{\mu}{6}$ & $\sm{\alpha}{6}$ & $\sm{k_{f}}{6}$ & \sm{\text{GFB\%}}{6} & \sm{\text{GFS\%}}{6} & $\sm{k_{p}}{6}$ & \sm{\text{GPB\%}}{6} & \sm{\text{GPS\%}}{6} & \\
		\hline
		\hline
        118 & 1e1 & 1 & 1 & 0.00 & 0.01 & 1 & 0.00 & 0.01    & 1e1 & 5 & 1 & 0.05 & 0.06 & 2 & 0.01 & 0.01     & 1e3 & 5 & 2 & 1.38 & 1.39 & 20 & 0.18 & 0.19 & 129654.63 \\ \hline
        300 & 1e3 & 1 & 1 & 0.60 & 0.60 & 4 & 0.03 & 0.04    & 1e2 & 1 & 1 & 0.03 & 0.03 & 2 & 0.01 & 0.01     & 1e3 & 10 & 7 & 0.18 & 0.18 & 12 & 0.08 & 0.08 & 719711.70 \\ \hline
		89pegase  & 1e2 & 1 & 1 & 0.11 & 0.11 & 1 & 0.11 & 0.11     & 1e2 & 1 & 1 & 0.11 & 0.11 & 1 & 0.11 & 0.11      & 1e2 & 10 & 19 & 0.23 & 0.23 & 19 & 0.23 & 0.23 & 5819.67 \\ \hline
		1354pegase & 1e2 & 1 & 1 & -- & 0.16 & 17 & -- & 0.08    & 1e2 & 1 & 1 & -- & 0.16 & 17 & -- & 0.08     & 1e3 & 5 & 14 & -- & 0.34 & 14 & -- & 0.34 & 74062.53 \\ \hline
        \hline   
        9mod & 1e4 & 1 & 5 & 0.89 & 11.63 & 15 & 0.03 & 10.87    & 1e4 & 1 & 5 & 0.89 & 11.63 & 15 & 0.03 & 10.87      & 1e4 & 1 & 9 & 0.48 & 11.27 & 17 & 0.03 & 10.87 & 2753.04 \\ \hline

		39mod1  & 1e4 & 5 & 14 & 2.69  & 6.28 & 48 & 0.18 & 3.87     & 1e4 & 1 & 2 & 6.32 & 9.78 & 33 & 0.11 & 3.80     & 1e4 & 5 & 3 & 9.23 & 12.58 & 48 & 0.18 & 3.87 & 10804.08 \\ \hline
        39mod2  & 1e1 & 1 & 3 & 0.04 & 0.19 & 3 & 0.04 & 0.19    & 1e1 & 1 & 3 & 0.04 & 0.19 & 3 & 0.04 & 0.19      & 1e1 & 5 & 3 & 0.07 & 0.22 & 4 & 0.06 & 0.21 & 940.34\\ \hline
        39mod3  & 1e1 & 1 & 2 & -0.01 & 0.29 & 2 & -0.01 & 0.29   & 1e1 & 1 & 2 & -0.01 & 0.29 & 2 & -0.01 & 0.29    & 1e1 & 5 & 4 & -0.01 & 0.29 & 4 & -0.01 & 0.29 & 1884.38 \\ \hline
        39mod4  & 1e1 & 1 & 1 & 2.15 & 2.16 & 3 & 0.03 & 0.04     & 1e1 & 1 & 1 & 2.15 & 2.16 & 3 & 0.03 & 0.04     & 1e1 & 5 & 5 & 1.31 & 1.32 & 11 & 0.06 & 0.08 & 557.08 \\ \hline
        118mod & 1e1 & 1 & 1 & 0.00 & 0.00 & 1 & 0.00 & 0.00     & 1e1 & 1 & 1 & 0.00 & 0.00 & 1 & 0.00 & 0.00      & 1e3 & 5 & 2 & 1.42 & 1.42 & 21 & 0.18 & 0.18 & 129624.98 \\ \hline
        300mod  & 1e4 & 1 & 6 & 0.89 & 1.03 & 24 & 0.27 & 0.41     & 1e4 & 1 & 6 & 0.89 & 1.03 & 24 & 0.27 & 0.41    & 1e4 & 5 & 12 & 1.12 & 1.26 & 39 & 0.51 & 0.64 & 378022.80 \\ \hline
        300mod1 & 1e1 & 1 & 1 & -1.37 & 0.00 & 1 & -1.37 & 0.00  & 1e2 & 1 & 2 & -1.35 & 0.02 & 3 & -1.36 & 0.01     & 1e3 & 5 & 6 & -1.21 & 0.16 & 9 & -1.26 & 0.11 & 474625.99\\ \hline
      \hline
 nesta\_30\_as & 1e1 & 1 & 1 & 0.39 & 0.39 & 2 & 0.01 & 0.01      & 1e1 & 1 & 1 & 0.39 & 0.39 & 2 & 0.01 & 0.01     & 1e2 & 1 & 16 & 0.01 & 0.01 & 16 & 0.01 & 0.01  & 803.13\\ \hline
        nesta\_30\_fsr & 1e1 & 1 & 1 & 0.08 & 0.08 & 2 & 0.01 & 0.01       & 1e1 & 1 & 1 & 0.08 & 0.08 & 2 & 0.01 & 0.01      & 1e2 & 5 & 2 & 2.03 & 2.03 & 18 & 0.07 & 0.07  & 575.77 \\ \hline
        nesta\_30\_ieee & 1e1 & 1 & 1 & 0.27 & 0.27 & 3 & 0.00 & 0.00       & 1e1 & 1 & 1 & 0.27 & 0.27 & 3 & 0.00 & 0.00      & 1e2 & 5 & 9 & 4.52 & 4.52 & 22 & 0.05 & 0.05  & 204.97\\ \hline
        nesta\_39\_epri & 1e2 & 1 & 1 & 0.01 & 0.02 & 1 & 0.01 & 0.02     & 1e2 & 1 & 2 & 0.00 & 0.02 & 2 & 0.00 & 0.02      & 1e3 & 5 & 5 & 0.09 & 0.11 & 8 & 0.04 & 0.06 & 96491.10\\ \hline
        nesta\_57\_ieee & 1e1 & 1 & 1 & 0.10 & 0.10 & 3 & 0.00 & 0.00      & 1e1 & 1 & 1 & 0.10 & 0.10 & 3 & 0.00 & 0.00     & 1e2 & 1 & 9 & 0.05 & 0.05 & 11 & 0.03 & 0.03 & 1143.27\\ \hline
        nesta\_73\_ieee\_rts & 1e1 & 1  & 1 & 0.00 & 0.00 & 1 & 0.00 & 0.00      & 1e1 & 1 & 1 & 0.00 & 0.00 & 1 & 0.00 & 0.00     & 1e3 & 5 & 3 & 0.10 & 0.10 & 6 & 0.01 & 0.01  & 189764.08\\ \hline
		\hline
        nesta\_30\_as\_api & 1e1 & 1 & 1 & 0.38 & 0.38 & 2 & 0.00 & 0.00      & 1e2 & 1  & 2 & 0.80 & 0.80 & 5 & 0.01 & 0.01     & 1e2 & 10 & 17 & 0.35 & 0.35 & 33 & 0.01 & 0.01  & 570.08\\ \hline
        nesta\_30\_fsr\_api & 1e2 & 5 & 1 & 1.02 & 11.45 & 7 & 0.03 & 10.56      & 1e3 & 5 & 1 & 5.61 & 15.56 & 22 & 0.13 & 10.65     & 1e4 & 5 & 9 & 4.50 & 14.56 & 76 & 0.55 & 11.03 & 327.95\\ \hline
        nesta\_30\_ieee\_api  & 1e1 & 1 & 1 & 0.00 & 0.00 & 1 & 0.00 & 0.00     & 1e1 & 5 & 5 & 0.00 & 0.00 & 5 & 0.00 & 0.00     & 1e2 & 5 & 33 & 0.00 & 0.00 & 33 & 0.00 & 0.00   & 414.99\\ \hline
        nesta\_39\_epri\_api & 1e1 & 1 & 1 & 0.01 & 0.01 & 1 & 0.01 & 0.01      & 1e1 & 1 & 3 & 0.00 & 0.00 & 3 & 0.00 & 0.00     & 1e3 & 5 & 23 & 0.06 & 0.06 & 25 & 0.00 & 0.00 & 7460.37\\ \hline
        nesta\_57\_ieee\_api & 1e1 & 1  & 1 & 0.36 & 0.44 & 2 & 0.00 & 0.09    & 1e1 & 1  & 2 & 0.00 & 0.09 & 2 & 0.00 & 0.09    & 1e2 & 1  & 5 & 0.85 & 0.93 & 11 & 0.02 & 0.11 & 1429.51 \\ \hline
        nesta\_73\_ieee\_rts\_api & 1e3 & 1 & 3 & 1.96 & 6.18 & 25 & 0.21 & 4.50     & 1e3 & 5 & 3 & 2.74 & 6.92 & 50 & 0.60 & 4.87    & 1e4 & 5 & 7 & 5.51 & 9.57 & 79 & 1.90 & 6.11  & 19135.79 \\ \hline
        \hline
        nesta\_30\_as\_sad & 1e1 & 5 & 3 & 0.00 & 0.24 & 3 & 0.00 & 0.24       & 1e2 & 1 & 1 & 1.68 & 1.91 & 3 & 0.00 & 0.24     & 1e2 & 10 & 4 & 2.65 & 2.88 & 15 & 0.00 & 0.24 & 895.34 \\ \hline
        nesta\_30\_fsr\_sad & 1e1 & 1 & 1 & 0.02 & 0.04 & 2 & 0.00 & 0.02       & 1e1 & 1 & 1 & 0.02 & 0.04 & 2 & 0.00 & 0.02    & 1e2 & 5 & 7 & 0.21 & 0.23 & 12 & 0.02 & 0.04 & 576.68 \\ \hline
        nesta\_30\_ieee\_sad & 1e1 & 1 & 1 & 0.27 & 0.27 & 3 & 0.00 & 0.00      & 1e1 & 1 & 1 & 0.27 & 0.27 & 3 & 0.00 & 0.00     & 1e2 & 5 & 9 & 4.52 & 4.52 & 22 & 0.05 & 0.05 & 204.97 \\ \hline
        nesta\_39\_epri\_sad & 1e2 & 1 & 1 & 0.00 & 0.06 & 1 & 0.00 & 0.06     & 1e2 & 1 & 1 & 0.00 & 0.06 & 1 & 0.00 & 0.06     & 1e3 & 5 & 5 & 0.05 & 0.10 & 6 & 0.00 & 0.06  & 96692.45 \\ \hline
        nesta\_57\_ieee\_sad & 1e1 & 1 & 1 & 0.10 & 0.10 & 3 & 0.00 & 0.00     & 1e1 & 1 & 1 & 0.10 & 0.10 & 3 & 0.00 & 0.00    & 1e2 & 1 & 9 & 0.05 & 0.05 & 11 & 0.03 & 0.03   &  1143.27\\ \hline
        nesta\_73\_ieee\_rts\_sad & 1e3 & 1 & 1 & 0.01 & 2.76 & 1 & 0.01 & 2.76      & 1e3 & 5 & 1 & 0.02 & 2.77 & 2 & 0.01 & 2.76   & 1e4 & 5 & 2 & 4.78 & 7.40 & 5 & 0.02 & 2.77 &  221480.59\\ \hline
			\end{tabular}
            }
	\label{tab:Small_tb}
    \vspace{-2.5mm}
\end{table*}

\begin{table}
 \caption{Performance of the SOCP sequential algorithm on a large-scale 13659-bus European system presented in \cite{zimmerman2011matpower}.}
\scriptsize
	\centering
    \scalebox{0.91}{
	\begin{tabular}{|@{\,}c@{\,}
    |@{\,}|
    @{\!\!\:\,}c@{\,}| @{\,}c@{\,}| @{\,}c@{\,}| @{\,}c@{\,}| @{\,}c@{\,}| @{\,}c@{\,}| @{\,}c@{\,}| @{\,}c@{\,}
    | @{\,}|
    @{\!\!\;\,}c@{\,}|}
		\hline
		\multirow{2}{*}{Test Cases}& \multicolumn{8}{c|@{\,}|@{\!\!\;\,}}{SOCP}& \multirow{2}{*}{$c_{b}$}\\
 		\cline{2-9}
 		& $\sm{\mu}{6}$ & $\sm{\alpha}{6}$ & $\sm{k_{f}}{6}$ & \sm{\text{GFB\%}}{6} & $\sm{k_{p}}{6}$ & \sm{\text{GPB\%}}{6} & $\sm{c_{f}}{6}$ & $\sm{c_{p}}{6}$ &\\
		\hline
		\hline
		pegase 13659 & 1e2 & 5 & 12 & 0.18 & 20 & 0.15 & 386805.70 & 386691.22 & 386115.18 \\ \hline
		nesta\_pegase\_13659 & 1e2 & 5 & 12 & 0.18 & 20 & 0.15 & 386807.08 & 386692.66 & 386125.24 \\ \hline
        nesta\_pegase\_13659\_api & 1e2 & 1 & 7 & 1.16 & 20 & 0.32 & 306457.63 & 303854.93 & 302891.76 \\ \hline
        nesta\_pegase\_13659\_sad & 1e2 & 1 & 4 & 0.13 & 20 & 0.05 & 386630.16 & 386352.72 & 386145.99\\
        \hline
	\end{tabular}}
	\label{tab:Large_tb}
    \vspace{-2.5mm}
\end{table}


As our third experiment, we evaluate the performance of the proposed sequential scheme for solving OPF on several benchmark systems. The numerical results are reported in Tables \ref{tab:Small_tb} and \ref{tab:Large_tb}. For all of the test cases, we have initialized Algorithm \ref{alg:1} with flat start, i.e., $\vbf_{\sm{0}{5}}\!=\!\onebf$, $\pbf_{\sm{0}{5}}=\!\pbf_{\mathrm{min}}$, $\qbf_{\sm{0}{5}}\!=\mathbf{0}$, $\vec{\sbf}_{\sm{0}{5}} = \diagrm\{ \vec{\Cbf}\vbf_{\sm{0}{5}}\vbf_{\sm{0}{5}}^{\top}\vec{\Ybf}^{\ast}\}$, and $\cev{\sbf}_{\sm{0}{5}} = \diagrm\{ \cev{\Cbf}\vbf_{\sm{0}{5}}\vbf_{\sm{0}{5}}^{\top}\cev{\Ybf}^{\ast}\}$. Additionally, the parameter $\eta$ is set to zero for all cases.
%
%
To assess the quality of the proposed sequential penalized convex relaxations the following numbers are reported
\begin{itemize}
\item{$c_{b}$: is the best known cost value that is reported by benchmark producers.}
\item{$c_{s}$: is the resulting cost lower bound from unpenalized SDP relaxation.}
\item{$k_{f}$: is the number of first round that produces an OPF feasible point, satisfying $\tr\{\Wbf-\vbf\vbf^{\ast}\}<10^{\smallMinus 7}$.} 
\item{$c_{f}$: is the total generation cost associated with the
operating point $(\vbf,\pbf\!+\!\irm\qbf, \vec\sbf,\cev\sbf)$ at round $k_{f}$.}
\item{$\mathrm{GFB\%}=100\times (c_{f} - c_{b})/c_{f}$: is the percentage optimality gap between the cost value at round $k_{f}$ and the best cost value reported by benchmark producers.}
\item{$\mathrm{GFS\%}=100\times (c_{f} - c_{s})/c_{f}$: is the percentage optimality gap between the cost value at round $k_{f}$ and the lower bound from unpenalized SDP relaxation.}
\item{$k_{p}$: In Table \ref{tab:Small_tb}, $k_{p}$ is the first round number whose cost value (without penalty) is not more than 0.01\% improved compared to the previous round. However, for Table \ref{tab:Large_tb}, Algorithm \ref{alg:1} is terminated after 20 rounds and $k_{p}=20$ regardless of the progress.  }
\item{$c_{p}$: is the total generation cost associated with the operating point $(\vbf,\pbf\!+\!\irm\qbf,\vec\sbf,\cev\sbf)$ at round $k_{p}$.}
\item{$\mathrm{GPB\%}=100\times (c_{p} - c_{b})/c_{p}$: is the percentage optimality gap between the cost value at round $k_{p}$ and the best cost value reported by benchmark producers.}
\item{$\mathrm{GPS\%}=100\times (c_{p} - c_{s})/c_{p}$: is the percentage optimality gap between the cost value at round $k_{p}$ and the lower bound from unpenalized SDP relaxation.}
%
\end{itemize}
Note that the GFB\% and GPB\% values reported in Table \ref{tab:Small_tb} are calculated according to the best upper bounds provided by \cite{coffrin2014nesta,bukhsh2013local,zimmerman2011matpower}. 

For all of the test cases reported in Table \ref{tab:Small_tb}, Algorithm \ref{alg:1} equipped with any of the SDP, SOCP, and parabolic relaxations yields fully feasible points within the first few rounds. 
As shown in Table \ref{tab:Small_tb}, Algorithm \ref{alg:1} produces feasible points within $0.2\%$ gap from the best reported solutions for benchmark systems case9mod and case39mod1, which is an improvement upon the existing penalization methods \cite{molzahn2015solution}.

To verify the scalability of the proposed method, we conduct experiments on the largest available benchmark instances from \cite{zimmerman2011matpower} and \cite{coffrin2014nesta}. The results are reported in Table \ref{tab:Large_tb}. Algorithm \ref{alg:1} equipped with SOCP relaxation finds fully feasible solutions that are not more than 0.4\% away from the upper bounds obtained by solving OPF using \matpower.

\section{Conclusion}
This paper is concerned with the AC optimal power flow (OPF) problem. We first consider two common practice semidefinite programming (SDP) and second order cone programming (SOCP) relaxations of OPF. Due to the computational complexity of conic optimization, we propose an efficient alternative, called parabolic relaxation, which transforms arbitrary non-convex quadratically constrained quadratic programs (QCQPs) to convex QCQPs. 
Additionally, we propose a novel penalization method which is guaranteed to provide feasible points for the original non-convex OPF, under certain assumptions. 
By applying the proposed penalized convex relaxations sequentially, we obtained fully feasible points with promising global optimality gaps for several challenging benchmark instances of OPF.
{
\bibliographystyle{IEEEtran}
\bibliography{IEEEabrv,egbib}
}

\end{document}